\documentclass{amsart}

\usepackage{amssymb,latexsym,amsmath,extarrows}
\usepackage{graphicx}
\usepackage{xcolor}
\usepackage{float}

\newtheorem*{acknowledgement}{Acknowledgements}
\newtheorem{theorem}{Theorem}[section]
\newtheorem{lemma}[theorem]{Lemma}

\newtheorem{remark}[theorem]{Remark}

\newtheorem{definition}[theorem]{Definition}

\newcommand{\al}{\alpha}

\newcommand{\ga}{\gamma}

\newcommand{\e}{\varepsilon}

\newcommand{\la}{\lambda}

\newcommand{\si}{\sigma}

\newcommand{\eps}{\epsilon}

%mathcalLIGRAPHIC

\newcommand{\cy}{\mathcal Y}

%GENERAL

\newcommand{\ZR}{\mathbb{R}}
\newcommand{\ZZ}{\mathbb{Z}}
\newcommand{\ZC}{\mathbb{C}}
\newcommand{\ZT}{\mathbb{T}}
\newcommand{\ZB}{\mathbb{B}}

\newcommand{\eit}{e^{i t \Delta}}
\newcommand{\eir}{e^{i r \Delta}}
\newcommand{\hichi}{\raisebox{0.7ex}{\(\chi\)}}

\begin{document}

\title[Pointwise convergence and multilinear refined Strichartz]{Pointwise convergence of Schr\"odinger solutions and multilinear refined Strichartz estimates}

\author{Xiumin Du}
\address{
Institute for Advanced Study\\
Princeton, NJ}
\email{xdu@math.ias.edu}
\author{Larry Guth}
\address{Department of Mathematics\\
MIT\\
Cambridge MA}
\email{lguth@math.mit.edu}
\author{Xiaochun Li}
\address{Mathematics Department\\
University of Illinois at Urbana-Champaign\\
Urbana IL}
\email{xcli@math.uiuc.edu}
\author{Ruixiang Zhang}
\address{
Institute for Advanced Study\\
Princeton, NJ}
\email{rzhang@math.ias.edu}

\begin{abstract}
We obtain partial improvement toward the pointwise convergence problem of Schr\"odinger solutions, in the general setting of fractal measure. In particular, we show that, for $n\geq 3$, $\lim_{t \to 0} e^{it\Delta}f(x) = f(x)$ almost everywhere with respect to Lebesgue measure for all $f \in H^s (\mathbb{R}^n)$ provided that $s>(n+1)/2(n+2)$. The proof uses linear refined Strichartz estimates. We also prove a multilinear refined Strichartz using decoupling and multilinear Kakeya.
\end{abstract}

\maketitle

\section{Introduction}
The solution to the free Schr\"{o}dinger equation
\begin{equation}
  \begin{cases}
    iu_t - \Delta u = 0, &(x,t)\in \mathbb{R}^n \times \mathbb{R} \\
    u(x,0)=f(x), & x \in \mathbb{R}^n
  \end{cases}
\end{equation}
is given by
$$
  e^{it\Delta}f(x)=(2\pi)^{-n}\int e^{i\left(x\cdot\xi+t|\xi|^2\right)}\widehat{f}(\xi) \, d\xi.
$$

In \cite{lC}, Carleson proposed the problem of identifying the optimal $s$ for which $\lim_{t \to 0}e^{it\Delta}f(x)=f(x)$ almost everywhere whenever
$f\in H^s(\mathbb{R}^n),$ and proved convergence
for $s \geq 1/4$ when $n=1$. Dahlberg and Kenig \cite{DK} then showed that this result is sharp. The higher dimensional case has since been studied by several authors. In particular, almost everywhere convergence holds if $s>1/2-1/(4n)$ when $n\geq 2$ ($n=2$ due to Lee \cite{sL} and $n\geq 2$ due to Bourgain \cite{jB12}). Recently Bourgain \cite{jB16} gave counterexamples showing that convergence can fail if $s<\frac{n}{2(n+1)}$. Since then, the first three authors \cite{DGL} improved the sufficient condition when $n=2$ to the almost sharp $s>1/3$.

In this article, we obtain the following partial improvement in higher dimensions:
\begin{theorem}\label{thm-PC}
Let $n\geq 3$. For every  $f\in H^s(\mathbb R^n)$ with $s>\frac{n+1}{2(n+2)}$, $\lim_{t \to 0}e^{it\Delta}f(x)=f(x)$ almost everywhere.
\end{theorem}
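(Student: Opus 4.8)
The plan is to reduce the almost-everywhere convergence statement to a local maximal estimate of the form
\begin{equation*}
\left\| \sup_{0<t<1} |e^{it\Delta} f| \right\|_{L^2(B^n(0,1))} \lesssim_\e \|f\|_{H^s(\mathbb{R}^n)}, \qquad s > \frac{n+1}{2(n+2)},
\end{equation*}
and in fact, by Littlewood--Paley decomposition and parabolic rescaling, to the frequency-localized estimate
\begin{equation*}
\left\| \sup_{0<t<R} |e^{it\Delta} f| \right\|_{L^2(B^n(0,R))} \lesssim_\e R^{\f{n+1}{2(n+2)}+\e} \|f\|_{L^2}, \qquad \wh{f} \text{ supported in } B^n(0,1).
\end{equation*}
Since the supremum in $t$ can be discretized and the solution restricted to a parabolic neighborhood, the left side is comparable to $\|e^{it\Delta} f\|_{L^2(X)}$ for a union $X$ of unit cubes in $\mathbb{R}^{n+1}$, one above (almost) every unit cube in $B^n(0,R)$; so the goal becomes a bound on $\|e^{it\Delta}f\|_{L^2(X)}$ in terms of the number $N$ of cubes in $X$ and $\|f\|_2$, and this is exactly the kind of quantity controlled by a refined Strichartz inequality. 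In fact the clean way to organize everything is to establish an $L^2$ estimate against \emph{fractal measures}: if $\mu$ is a measure on $B^{n}(0,R)$ (or $B^{n+1}(0,R)$) with $\mu(B(x,r)) \le r^\alpha$ for all balls, then
\begin{equation*}
\int |e^{it\Delta} f|^2 \, d\mu \lesssim_\e R^{\e} R^{\beta(\alpha)} \|f\|_2^2
\end{equation*}
for an appropriate exponent $\beta(\alpha)$; Theorem~\ref{thm-PC} follows by taking $\alpha$ close to $n$ and optimizing, using the standard Frostman-type argument that converts such an estimate into the maximal bound above.

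The main input is the refined Strichartz estimate. On the classical (non-refined) side, the Strichartz / restriction-type estimate says $\|e^{it\Delta}f\|_{L^{q}(\mathbb{R}^{n+1})} \lesssim \|f\|_2$ for $q = 2(n+2)/n$; the point of the \emph{refined} version is that if $e^{it\Delta}f$ is concentrated on a set $X$ that is a union of $N$ unit cubes inside $B_R$, then one gains a power of $N$:
\begin{equation*}
\|e^{it\Delta}f\|_{L^q(X)} \lesssim_\e R^\e \left(\frac{N}{R^{n}}\right)^{\gamma} \|f\|_2
\end{equation*}
for a suitable $\gamma > 0$. I would prove this by the Bourgain--Guth / broad--narrow method combined with $\ell^2$ decoupling for the paraboloid (Bourgain--Demeter) to handle the broad part, and with parabolic rescaling plus induction on scales for the narrow part; at the multilinear/broad stage one uses multilinear Kakeya (or multilinear restriction) to exploit transversality, which is precisely where the gain in $N$ enters because wave packets in transverse directions can overlap on only a small portion of any given cube. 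Interpolating this refined $L^q$ bound with the trivial $L^2$ bound and feeding it into the fractal-measure framework yields the exponent $\f{n+1}{2(n+2)}$.

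Carrying this out, several technical steps need care: (i) the passage from the continuous supremum $\sup_{0<t<1}$ to a discrete maximal function over a union of cubes, including the local smoothing / parabolic-rescaling reduction that upgrades the fixed-time estimate to the time-maximal one --- this is largely standard after \cite{DGL} but must be done in $n \ge 3$; (ii) correctly bookkeeping the induction on scales so that the $R^\e$ losses do not accumulate, which forces one to state the refined estimate with the $R^\e$ factor from the start and run the induction on $R$ in, say, multiplicative steps; and (iii) verifying that the multilinear Kakeya input is applied at the right scale with the right transversality parameter, since the paraboloid's curvature is what guarantees that well-separated caps give quantitatively transverse tube directions. The hard part will be (ii)--(iii): making the broad--narrow recursion close with the decoupling exponent on one side and the multilinear Kakeya gain on the other, so that the resulting $\gamma$ is large enough to produce $s > \f{n+1}{2(n+2)}$ rather than a weaker exponent. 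Once the refined Strichartz estimate is in hand, the deduction of Theorem~\ref{thm-PC} via the fractal-measure $L^2$ estimate and a Frostman/pigeonholing argument is essentially formal.
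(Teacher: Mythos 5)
Your overall reduction (a fractal-measure $L^2$ maximal estimate, the Frostman/pigeonholing conversion, Littlewood--Paley and parabolic rescaling) matches the paper's route through Lemma \ref{sme-pc} and Theorems \ref{sme-1} and \ref{thm-linL2-al}, but the quantitative core of your argument is missing, and the key estimate is mis-stated. The refined Strichartz you propose, $\|e^{it\Delta}f\|_{L^q(X)}\lesssim_\e R^\e (N/R^n)^\gamma\|f\|_2$ with $X$ a union of $N$ unit cubes and $\gamma>0$, is false as written: for a focusing example with $\widehat f$ a bump on $B^n(0,1)$ one has $|e^{it\Delta}f|\sim\|f\|_2$ on a single unit cube, so taking $N=1$ forces $\gamma=0$ (a refined Strichartz gains when the solution is \emph{spread out}, i.e.\ when the cube count is large, not small); and in the regime actually relevant for the maximal function, where $X$ contains a unit cube above essentially every unit cube of $B^n_R$ (so $N\sim R^n$), your inequality degenerates to plain Strichartz and produces nothing near the exponent $\frac{n+1}{2(n+2)}$. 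More fundamentally, no estimate phrased purely in terms of the total number of cubes captures the mechanism. The paper's input, Theorem \ref{thm-LRS}, gives a gain $\sigma^{-\frac{1}{n+2}}$ in terms of the number $\sigma$ of $R^{1/2}$-cubes per horizontal $R^{1/2}$-slab, after pigeonholing so that $\|e^{it\Delta}f\|_{L^{p_{n+1}}(Q_j)}$ is essentially constant and the per-slab counts are comparable; the point is the cancellation in the application: because the supremum in $t$ lets one select at most one unit ball per vertical line, the total number of selected $R^{1/2}$-cubes satisfies $N\lesssim\sigma R^{1/2}$, the projected fractal measure is $\lesssim NR^{\alpha/2}$, and H\"older (not interpolation with $L^2$) gives $\lessapprox \sigma^{-\frac{1}{n+2}}\bigl(\sigma R^{1/2}R^{\alpha/2}\bigr)^{\frac{1}{n+2}}=R^{\frac{\alpha+1}{2(n+2)}}$, which is exactly Theorem \ref{thm-linL2-al}. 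Your sketch contains neither the slab-count formulation nor this counting, and the sentence ``interpolating \dots and feeding it into the fractal-measure framework yields the exponent'' is precisely the step that does not go through as described.

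A secondary discrepancy: the only refined Strichartz needed for Theorem \ref{thm-PC} is the linear one, and the paper proves it using Bourgain--Demeter $\ell^2$ decoupling, parabolic rescaling and induction on scales alone --- no broad--narrow decomposition and no multilinear Kakeya. Multilinear Kakeya enters only the separate multilinear refined Strichartz (Theorem \ref{thm-kRS}), which is not used in the proof of the convergence theorem. Proposing a broad--narrow/multilinear route is not by itself fatal, but since you give no details there either, both the statement and the proof of the decisive lemma are absent from your plan.
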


A natural refinement of Carleson's problem was initiated by Sj\"ogren
and Sj\"olin \cite{SS}: determine the size of divergence set, in particular, consider
$$
\al_n(s):= \sup_{f\in H^s(\ZR^n)} {\rm dim} \left\{x\in \ZR^n: \lim_{t \to 0}e^{it\Delta}f(x)\neq f(x) \right\}\,,
$$
where ${\rm dim}$ stands for the Hausdorff dimension.
Note that when $s>n/2$ the solution is continuous and so $\al_n(s)=0$. Various counterexamples were constructed and in summary the previous results yield
$$
\alpha_n(s) \geq 
\begin{cases}
n, & s<\frac{n}{2(n+1)} \quad \text{(Bourgain \cite{jB16})}\\
n+\frac{n}{n-1}-\frac{2(n+1)s}{n-1}, & \frac{n}{2(n+1)}\leq s<\frac{n+1}{8} \quad \text{(Luc\`a-Rogers \cite{LR17'})}\\
n+1-\frac{2(n+2)s}{n}, & \frac{n+1}{8}\leq s<\frac{n}{4} \quad \text{(Luc\`a-Rogers \cite{LR17})}\\
n-2s, &\frac{n}{4}\leq s\leq \frac{n}{2} \quad \text{(\v{Z}ubrini\'c \cite{Z})}.
\end{cases}
$$
And the previous best known upper bounds are
$$
\al_n(s)\leq
\begin{cases}
n+1-(2+\frac{2}{2n-1})s, &\frac{1}{2}-\frac{1}{4n}<s\leq 1-\frac{3}{2(n+1)} \quad (\text{Luc\`a-Rogers \cite{LR}})
\\
n+1-\frac{1}{n+1}-2s, &1-\frac{3}{2(n+1)}\leq s<\frac{n}{4} \quad (\text{Luc\`a-Rogers \cite{LR}})
\\
n-2s, & \frac{n}{4}\leq s\leq \frac{n}{2} \quad \text{(Barcel\'o-Bennett-Carbery-Rogers \cite{BBCR})}.
\end{cases}
$$
The case $n=1$ has been solved completely. In higher dimensions, we improve Luc\`a-Rogers' result:
\begin{theorem} \label{thm-PC-al}
Let $n\geq 3$. Then
\begin{equation}
\al_n(s)\leq n+1-(2+\frac{2}{n+1})s, \quad
\frac{n+1}{2(n+2)}<s<n/4\,.
\end{equation}
\end{theorem}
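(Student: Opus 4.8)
The plan is to deduce Theorem~\ref{thm-PC-al} from a weighted $L^2$ estimate for the Schr\"odinger maximal function against fractal measures, and to prove that estimate with a linear refined Strichartz inequality resting on $\ell^2$-decoupling and multilinear Kakeya.

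\textbf{Step 1 (reduction to a fractal maximal estimate).} By Frostman's lemma, if some $f\in H^s$ had a divergence set of Hausdorff dimension $>\alpha$, that set would support a probability measure $\mu$ with $\mu(B(x,r))\le r^\alpha$. Thus it suffices to establish: for every $\epsilon>0$ there is $C_\epsilon$ so that for all $R\ge1$, all $f$ with $\widehat f$ supported in $\{|\xi|\sim R\}$, and all probability measures $\mu$ on $B^n(0,1)$ with $\mu(B(x,r))\le r^\alpha$,
\[
\int_{B^n(0,1)}\ \sup_{0<t<1}\bigl|e^{it\Delta}f(x)\bigr|^2\,d\mu(x)\ \le\ C_\epsilon\,R^{\epsilon}\,R^{\,2s_0(\alpha)}\,\|f\|_{L^2}^2,\qquad s_0(\alpha):=\frac{(n+1)(n+1-\alpha)}{2(n+2)} .
\]
Applying this to the Littlewood--Paley pieces $P_Rf$, using $\|P_Rf\|_{L^2}^2\sim R^{-2s}\|P_Rf\|_{H^s}^2$, and summing the geometric series in $R=2^j$ --- which converges exactly when $s>s_0(\alpha)$, i.e. when $\alpha>n+1-(2+\tfrac{2}{n+1})s$ --- shows, via the standard high-frequency tail argument, that the divergence set is $\mu$-null, contradicting the choice of $\mu$; hence $\alpha_n(s)\le n+1-(2+\tfrac{2}{n+1})s$. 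For $s\ge n/4$ the Barcel\'o-Bennett-Carbery-Rogers bound $\alpha_n(s)\le n-2s$ is already available and is stronger (and the fractal estimate above degenerates there), so the statement is confined to $s<n/4$; the lower endpoint $s=\tfrac{n+1}{2(n+2)}$ is exactly where the bound becomes trivial, $\alpha_n(s)\le n$, recovering Theorem~\ref{thm-PC}.

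\textbf{Step 2 (set-up for the maximal estimate).} After a parabolic rescaling normalising the frequency support to $B^n(0,1)$, one works on a box of side $\sim R$ in space--time, where the Schr\"odinger wave packets live on tubes of dimensions $R^{1/2}\times\cdots\times R^{1/2}\times R$ with axis $(2\xi_0,1)$, $\xi_0\in B^n(0,1)$. Linearising the supremum by a measurable time function $t(x)$ and pigeonholing in the size of $|e^{it(x)\Delta}f(x)|$, it is enough to bound $\lambda^2\mu(E_\lambda)$ for each dyadic $\lambda$, where $E_\lambda$ is the portion of the level set on which $|e^{it(x)\Delta}f|\sim\lambda$. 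Using the locally constant property of $e^{it\Delta}f$ and $L^2$-orthogonality of wave packets, one pigeonholes further to a family $\mathbb Q$ of $R^{1/2}$-cubes covering $E_\lambda$ on which both $\|e^{it\Delta}f\|_{L^p(Q)}$ and $\mu(Q)$ are each comparable to a single value, with $p=\tfrac{2(n+2)}{n}$ the Strichartz exponent.

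\textbf{Step 3 (dichotomy and the role of refined Strichartz).} The mass $\mu(E_\lambda)$ admits two estimates: the Frostman cube-count $\mu(E_\lambda)\lesssim(\#\mathbb Q)\,R^{\alpha/2}$ (and its tube version $\mu(E_\lambda)\lesssim(\#\text{tubes})\,R^{(\alpha+1)/2}$), and a Strichartz-type estimate obtained by summing $\|e^{it\Delta}f\|_{L^p(Q)}^p$ over $Q\in\mathbb Q$ and bounding $\|e^{it\Delta}f\|_{L^p(\bigcup\mathbb Q)}$. The flat Strichartz bound $\|e^{it\Delta}f\|_{L^p}\lesssim\|f\|_{L^2}$ is wasteful when the cubes of $\mathbb Q$ are spread among many tubes, and this is where the \emph{linear refined Strichartz estimate} enters: if the cubes of $\mathbb Q$ are non-concentrated (no tube contains more than a controlled number of them), it yields a positive power of the non-concentration parameter over the flat bound. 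One then runs a case analysis --- in the concentrated regime, a lower-dimensional bush estimate inside a single tube combined with the Frostman bound for tubes; in the non-concentrated regime, the refined Strichartz --- and optimises the split, which reproduces $\lambda^2\mu(E_\lambda)\lesssim R^\epsilon R^{2s_0(\alpha)}\|f\|_{L^2}^2$ upon summing over $\lambda$.

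\textbf{Step 4 (the refined Strichartz, and the main obstacle).} The linear refined Strichartz estimate will be deduced from a multilinear refined Strichartz estimate, and the latter is the genuinely new ingredient: it is proved by applying Bourgain-Demeter $\ell^2$-decoupling for the truncated paraboloid at scale $R^{-1/2}$ and then converting transversality of the wave-packet tubes into a gain via the multilinear Kakeya inequality of Bennett-Carbery-Tao (in Guth's form). This is the hardest step, and it is also where the argument loses: the exponent produced by decoupling together with multilinear Kakeya is not sharp, which is why Theorem~\ref{thm-PC-al} stops short of the conjectural $\alpha_n(s)\le n-2s$ (equivalently $s>\tfrac{n}{2(n+1)}$ for almost-everywhere convergence). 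The remaining points --- lifting the Frostman measure to the graph of $t(x)$ while preserving the $\alpha$-dimensional bound, the locally constant reductions, and the bookkeeping of the parabolic rescaling relating the space--time estimate to the Hausdorff dimension on a spatial slice --- are routine but require care.
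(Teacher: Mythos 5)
Your Step 1 is essentially the paper's own reduction and is fine in outline: the threshold $s_0(\alpha)=\frac{(n+1)(n+1-\alpha)}{2(n+2)}$ is exactly the hypothesis of Theorem \ref{sme-1}, and the Frostman/Littlewood--Paley/high-frequency argument is Lemma \ref{sme-pc} of Luc\`a--Rogers (note the lemma also needs $\alpha_0\ge n-2s$, which indeed holds for $s<n/4$). The genuine gaps are in Steps 3 and 4, i.e.\ precisely where the maximal estimate is supposed to be proved. The linear refined Strichartz estimate of this paper (Theorem \ref{thm-LRS}) is keyed to the number $\sigma$ of $R^{1/2}$-cubes per \emph{horizontal time slab}, not to a non-concentration hypothesis on tubes, and its application involves no concentrated/non-concentrated dichotomy and no bush estimate. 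The point you miss is structural: since the linearised level set contains at most one unit ball per vertical column, the cube family $Y=\bigcup_{j=1}^N Q_j$ obtained after pigeonholing satisfies $N\lesssim R^{1/2}\sigma$, while the Frostman bound gives $\mu_R(\mathrm{Proj}(Y))\lesssim NR^{\alpha/2}$; combining these with H\"older,
\begin{equation*}
H\,\mu_R\big(A_H\cap\mathrm{Proj}(Y)\big)^{1/2}\lesssim \|e^{it\Delta}f\|_{L^{p_{n+1}}(Y)}\,\mu_R\big(A_H\cap\mathrm{Proj}(Y)\big)^{\frac{1}{n+2}}\lessapprox \sigma^{-\frac{1}{n+2}}\big(\sigma R^{1/2}R^{\alpha/2}\big)^{\frac{1}{n+2}}=R^{\frac{\alpha+1}{2(n+2)}},
\end{equation*}
which is the whole proof. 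Your proposed case analysis (tube-concentration parameter, ``lower-dimensional bush estimate inside a single tube,'' optimisation of the split) is not carried out, the assertion that it ``reproduces'' $\lambda^2\mu(E_\lambda)\lesssim R^{2s_0(\alpha)}\|f\|_2^2$ is unverified, and as written it invokes a refined Strichartz inequality whose gain is measured by cubes per tube --- a statement that Theorem \ref{thm-LRS} does not provide and that you do not prove.

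The second gap is your Step 4. In the paper the linear refined Strichartz is proved \emph{directly}, by Bourgain--Demeter $\ell^2$-decoupling, parabolic rescaling and induction on the radius; no transversality and no multilinear Kakeya enter. The multilinear refined Strichartz (Theorem \ref{thm-kRS}), which is the statement proved via decoupling plus Bennett--Carbery--Tao multilinear Kakeya, is recorded for independent interest and plays no role in the proof of Theorem \ref{thm-PC-al}. Your plan to deduce the linear estimate from the multilinear one is therefore not the paper's route, and it is a substantial missing step in its own right: the multilinear inequality gains $N^{-\frac{k-1}{k(n+2)}}$ in the \emph{total} number of cubes, whereas the linear inequality gains $\sigma^{-\frac{1}{n+2}}$ in the per-slab count, and passing from the former to the latter would require a Bourgain--Guth type broad/narrow decomposition with its own induction on scales and treatment of the narrow (non-transverse) contribution, none of which you supply or sketch, and which is not known to yield \eqref{LRS} as stated.
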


\begin{remark}
Theorem \ref{thm-PC-al} also holds when $n=2$ and that recovers the previous results of Lee \cite{sL}, Bourgain \cite{jB12} and Luc\'a-Rogers \cite{LR}, by a different method. In \cite{DGL}, the almost sharp result $s>1/3$ is obtained in the setting of Lebesgue measure, and the sharp Schr\"odinger maximal estimate in \cite{DGL} implies directly the following generalized improvement:
\begin{equation}
\al_2(s)\leq 3-3s,\quad 1/3 <s<1/2.
\end{equation}
\end{remark}

Note that Theorem \ref{thm-PC} is a special case of Theorem \ref{thm-PC-al}. By standard arguments, an upper bound for $\al_n(s)$ can be obtained from appropriate maximal estimates with respect to fractal measure (see for example \cite{LR}). More precisely,
\begin{definition}
Let $\al\in(0,n]$. We say that $\mu$ is $\al$-dimensional if it is a probability measure supported in the unit ball $B^n(0,1)$ and satisfies that
\begin{equation}
\mu(B(x,r))\leq C_\mu r^\al, \quad \forall r>0, \quad \forall x\in \ZR^n.
\end{equation}
\end{definition}

\begin{lemma}[Luc\`a-Rogers, Lemma 7.1 in \cite{LR}] \label{sme-pc}
Let $\al> \al_0\geq n-2s$ and suppose that 
$$
\big\|\sup_{0<t<1} |\eit f|\big\|_{L^1(d\mu)} \leq C_\mu \|f\|_{H^s(\ZR^n)}\,,
$$
whenever $f\in H^s(\ZR^n)$ and $\mu$ is $\al$-dimensional. Then $\al_n(s)\leq \al_0$.
\end{lemma}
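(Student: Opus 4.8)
The plan is to prove the contrapositive through the usual Frostman-lemma mechanism: the hypothesised weighted maximal estimate forces the $\eit$-divergence set of every $f\in H^s(\ZR^n)$ to be null for every $\al'$-dimensional measure with $\al'>\al_0$, and Frostman's lemma then upgrades this to the bound $\al_n(s)\le\al_0$.

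\emph{Step 1: reduction to a measure statement.} Fix $f\in H^s(\ZR^n)$, set $D:=\{x\in\ZR^n:\ \lim_{t\to0}\eit f(x)\neq f(x)\}$, and suppose toward a contradiction that $\mathrm{dim}\,D>\al_0$. Covering $\ZR^n$ by countably many balls of radius $1/2$ and using countable stability of Hausdorff dimension, one of them meets $D$ in a set of dimension $>\al_0$; since $\eit$ commutes with translations and translation preserves the $H^s$ norm, I may replace $f$ by a translate and assume $\mathrm{dim}\,(D\cap B^n(0,1))>\al_0$. Pick $\al'$ with $\al_0<\al'<\mathrm{dim}\,(D\cap B^n(0,1))$, so that $\al'>\al_0\ge n-2s$. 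Since $\mathcal H^{\al'}(D\cap B^n(0,1))=\nf$, Frostman's lemma produces a nonzero finite measure on $D\cap B^n(0,1)$, which after normalisation is a probability measure $\mu$ supported in $B^n(0,1)$ with $\mu(B(x,r))\le C_\mu r^{\al'}$ for all $x,r$ — i.e.\ an $\al'$-dimensional measure. It now suffices to prove $\mu(D)=0$, which contradicts $\mu(D)=1$.

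\emph{Step 2: the weak-type estimate.} Write $S^*g(x):=\sup_{0<t<1}|\eit g(x)|$ and, for $\la>0$, set
\[
\Om_\la:=\Big\{x\in B^n(0,1):\ \limsup_{t\to0}\big|\eit f(x)-f(x)\big|>\la\Big\}.
\]
If the $\limsup$ vanishes at $x$ then $\eit f(x)\to f(x)$, so $D\cap B^n(0,1)=\bigcup_{\la\in\mathbb Q,\,\la>0}\Om_\la$ and it is enough to show $\mu(\Om_\la)=0$ for each $\la>0$. Now take Schwartz functions $f_k\to f$ in $H^s(\ZR^n)$. For each $k$, $\eit f_k\to f_k$ uniformly as $t\to0$ (dominated convergence in $\int e^{ix\cdot\xi}(e^{it|\xi|^2}-1)\widehat{f_k}(\xi)\,d\xi$), so
\[
\limsup_{t\to0}\big|\eit f(x)-f(x)\big|=\limsup_{t\to0}\big|\eit(f-f_k)(x)-(f-f_k)(x)\big|\le S^*(f-f_k)(x)+|(f-f_k)(x)|,
\]
and Chebyshev's inequality gives
\[
\mu(\Om_\la)\le\frac{2}{\la}\,\|S^*(f-f_k)\|_{L^1(d\mu)}+\frac{2}{\la}\,\|f-f_k\|_{L^1(d\mu)}.
\]
The first term is $\le\tfrac{2C_\mu}{\la}\|f-f_k\|_{H^s}$ by the hypothesis applied to the $\al'$-dimensional measure $\mu$, and the second is $\le\tfrac{C}{\la}\|f-f_k\|_{H^s}$ for some $C=C(C_\mu,n,s)$ by the fractal Sobolev embedding $H^s(\ZR^n)\hookrightarrow L^2(d\mu)\subseteq L^1(d\mu)$, valid for $\al'$-dimensional $\mu$ with $\al'>n-2s$ (this is exactly where the assumption $\al_0\ge n-2s$ is used). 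Letting $k\to\nf$ yields $\mu(\Om_\la)=0$, hence $\mu(D)=0$.

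The two points that are not purely formal — and which I would need to nail down — are these. The main one is the fractal Sobolev embedding $H^s(\ZR^n)\hookrightarrow L^2(d\mu)$ for $\al'$-dimensional $\mu$ with $\al'>n-2s$; I would establish it by dualising and bounding $\int|g|^2\,d\mu$ by a Fourier-analytic estimate, using the $\al'$-dimensionality of $\mu$ to control $\widehat{d\mu}$, and it is there that the threshold $\al'>n-2s$ shows up. The second is interpretational: when $\mu$ is singular with respect to Lebesgue measure, ``$f(x)$'' above should be read as the $\mu$-a.e.\ value of the $L^2(d\mu)$-limit of $\eit f$ as $t\to0$ (which exists by that embedding together with $\eit f\to f$ in $H^s(\ZR^n)$), and with this convention everything above is consistent. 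Measurability of $S^*f$ and of $D$ is handled in the standard way, by first taking $\widehat f$ compactly supported — so that $\eit f$ is real-analytic in $(x,t)$ — and passing to the limit; since the maximal estimate is given to us as a hypothesis, this presents no obstacle.
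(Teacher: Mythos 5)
The paper does not actually prove this lemma: it is quoted from Luc\`a--Rogers (Lemma 7.1 of \cite{LR}), so there is no internal proof to compare against. Your argument is the standard one and, as far as I can tell, coincides in substance with the cited proof: Frostman reduction to showing that every $\al'$-dimensional measure with $\al'>\al_0$ assigns zero mass to the divergence set, then Chebyshev plus density of Schwartz functions, with the hypothesised $L^1(d\mu)$ maximal bound handling $\sup_{0<t<1}|\eit (f-f_k)|$ and the fractal trace inequality $\|g\|_{L^2(d\mu)}\lesssim_{C_\mu}\|g\|_{H^s(\ZR^n)}$ (valid for $\al'>n-2s$) handling $|(f-f_k)(x)|$ --- which is indeed where $\al_0\geq n-2s$ enters. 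Two remarks. First, a quantifier point you should make explicit: you apply the maximal hypothesis to a Frostman measure that is only $\al'$-dimensional for some $\al'\in(\al_0,\dim D)$, and such a measure need not be $\al$-dimensional if $\al'<\al$. So your proof really uses the hypothesis for \emph{every} $\al>\al_0$; with a single fixed $\al$ no argument of this type can conclude better than $\al_n(s)\leq\al$. That universally quantified reading is the intended one (and is exactly how the paper invokes the lemma, since Theorem \ref{sme-1} supplies the estimate for all $\al>\al_0(s)$), but as written your choice of $\al'$ silently presumes it. Second, the two ingredients you leave unproved are genuinely standard but should be supplied or cited precisely: the embedding $H^s(\ZR^n)\hookrightarrow L^2(d\mu)$ follows from a Littlewood--Paley decomposition together with the ball condition $\mu(B(x,r))\leq C_\mu r^{\al'}$ alone (for $g_j$ at frequency $2^j$ one gets $\int|g_j|^2\,d\mu\lesssim C_\mu 2^{j(n-\al')}\|g_j\|_2^2$ and sums since $s>(n-\al')/2$), rather than from pointwise control of $\widehat{\mu}$ as your sketch suggests; and the representative issue ($f$ is only defined Lebesgue-a.e.\ while $\mu$ may be singular) needs a fixed convention, e.g.\ the quasi-continuous representative, whose ambiguity set has dimension at most $n-2s\leq\al_0$ and therefore does not affect the conclusion --- one should check that the convention used to define $\al_n(s)$ and the $\mu$-a.e.\ identification of the limit agree. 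With these points filled in, your proof is complete and is essentially the same argument as the cited reference.
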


In view of Lemma \ref{sme-pc}, it suffices to prove the following Schr\"odinger maximal estimate w.r.t. fractal measure:

\begin{theorem} \label{sme-1}
Let $n\geq 3$ and $s>\frac{\al+1}{2(n+2)}+\frac{n-\al}{2}$. Then
$$
\big\|\sup_{0<t<1} |\eit f|\big\|_{L^2(d\mu)} \leq C_\mu \|f\|_{H^s(\ZR^n)}\,,
$$
whenever $f\in H^s(\ZR^n)$ and $\mu$ is $\al$-dimensional.
\end{theorem}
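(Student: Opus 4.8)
The plan is to reduce Theorem~\ref{sme-1}, by standard maneuvers, to a local estimate for the extension operator, and then to prove that estimate by feeding a discretization/pigeonholing scheme through a refined Strichartz inequality; the threshold $\tfrac{\al+1}{2(n+2)}+\tfrac{n-\al}{2}$ will come out of optimizing the parameters, and proving the refined Strichartz (by decoupling and multilinear Kakeya) will be the main work.

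\textbf{Step 1: reductions.} A Littlewood--Paley decomposition reduces matters (the summation over dyadic frequencies costs only a logarithm, absorbed into $R^\e$) to proving, for $\widehat f$ supported in $\{|\xi|\sim R\}$,
$$\big\|\sup_{0<t<1}|\eit f|\big\|_{L^2(d\mu)}\ \lesssim_\e\ R^{s+\e}\|f\|_2,\qquad s>\tfrac{\al+1}{2(n+2)}+\tfrac{n-\al}{2}.$$
After the standard reduction of the time interval to $(0,R^{-1})$ and the parabolic rescaling $\xi=R\eta$, $y=Rx$, $\tau=R^2t$, this turns into an estimate for $Eg(y,\tau)=\int_{|\eta|\le1}e^{i(y\cdot\eta+\tau|\eta|^2)}g(\eta)\,d\eta$, with $g(\eta)=\widehat f(R\eta)$ and $(y,\tau)$ ranging over (essentially) $B_R^{n+1}$; the measure $\mu$ rescales to an $\al$-dimensional measure $\nu$ on $B_R^n$ with $\nu(B(y,r))\lesssim(r/R)^\al$, so $\nu(Q)\lesssim R^{-\al/2}$ for $R^{1/2}$-cubes $Q$. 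Linearizing the supremum by a measurable function $\tau\colon B_R^n\to(0,R)$ and collecting normalizations, it suffices to prove
\begin{equation}\label{eq:reduced}
\int_{B_R^n}|Eg(y,\tau(y))|^2\,d\nu(y)\ \lesssim_\e\ R^{2s-n+\e},\qquad \|g\|_2=1.
\end{equation}

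\textbf{Step 2: discretization and the refined Strichartz.} Put $p=\tfrac{2(n+2)}{n}$, the Strichartz exponent for which Bourgain--Demeter $\ell^2$-decoupling for the paraboloid gives $\|Eg\|_{L^p(B_R^{n+1})}\lesssim_\e R^\e\|g\|_2$. Partition $B_R^n$ into lattice $R^{1/2}$-cubes $q$; since $g$ is supported in the unit ball, $Eg$ is locally constant at unit scale in both $y$ and $\tau$, so for $y\in q$
$$|Eg(y,\tau(y))|^p\ \lesssim\ \sum_{Q\ \text{over}\ q}\|Eg\|_{L^p(Q)}^p,$$
the sum over the lattice $R^{1/2}$-cubes $Q\subset\ZR^{n+1}$ lying over $q$. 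Pigeonhole dyadically: in the common size of $\|Eg\|_{L^p(Q)}$, in the number of $R^{-1/2}$-caps significant on each $Q$, and in the number of significant $Q$ above each $q$; discarding negligible contributions, the left side of \eqref{eq:reduced} is then controlled, up to $R^\e$, by an expression involving the number $N$ of significant $R^{1/2}$-cubes in $B_R^{n+1}$, their common $L^p$-height, the multiplicities, and $\nu$ of the relevant spatial cubes (each $\lesssim R^{-\al/2}$, with total mass $\le1$). The key input is the \emph{linear refined Strichartz estimate}, which bounds $\|Eg\|_{L^p}$ over the union of the $N$ significant cubes by $R^\e$ times a power-gain in $N$ over the plain Strichartz bound. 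Inserting this, together with the $\al$-dimensional bound on $\nu$ and --- where it is needed to close the estimate --- an induction on the scale $R$, and optimizing the free parameters, produces exactly the exponent $2s_0(\al)-n$ with $s_0(\al)=\tfrac{\al+1}{2(n+2)}+\tfrac{n-\al}{2}$; the factor $n+2$ enters through $p$ and through the exponent of the refined Strichartz gain. Since $s>s_0(\al)$ there is room for $\e$, and \eqref{eq:reduced} follows.

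\textbf{Step 3: the refined Strichartz, and the main obstacle.} Everything rests on the refined Strichartz estimate with the sharp gain, which I would prove by a Bourgain--Guth broad--narrow analysis at scale $R$. In the \emph{broad} regime, where the caps contributing to a given region are transverse, one dominates $|Eg|$ by a multilinear expression and applies a \emph{multilinear refined Strichartz} --- established by combining decoupling for the paraboloid with the Bennett--Carbery--Tao/Guth multilinear Kakeya inequality, which is precisely where transversality of the wave packets becomes a genuine reduction in the number of cubes that can be heavy. In the \emph{narrow} regime the contribution concentrates near a neighborhood of a lower-dimensional variety, where one parabolically rescales to a smaller ball and closes by induction on $R$, taking care that the fractal measure descends correctly under the rescaling. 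Arranging the two regimes to combine with the sharp exponent while losing only $R^\e$ at each step (so that the induction closes), and handling the essentially one-dimensional behaviour of $Eg$ in the time variable along the ``columns'' of cubes, is the technical heart of the argument. The reduction of the time interval, the measurability in the linearization, and the logarithmic losses from the pigeonholing are routine.
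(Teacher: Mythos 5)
Your Step 1 reduction (Lee's localization, Littlewood--Paley, parabolic rescaling, leading to a local estimate on $B_R^{n+1}$ with the rescaled $\al$-dimensional measure and the target exponent $2s-n$, equivalently $R^{\frac{\al+1}{2(n+2)}}$ in the paper's normalization) and your general scheme (discretize into $R^{1/2}$-cubes, pigeonhole, apply a refined Strichartz, use the $\al$-dimensional bound on the shadows of the cubes) are indeed the paper's route to Theorem \ref{thm-linL2-al}. But the key input is mis-stated, and this is a genuine gap rather than a presentational one. You invoke a \emph{linear} refined Strichartz that gains a power of $N$, the total number of significant $R^{1/2}$-cubes. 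No such estimate holds: if $\eit f$ is a single wave packet, the $N\sim R^{1/2}$ cubes along its tube all carry comparable $L^{p_{n+1}}$ mass and $\|\eit f\|_{L^{p_{n+1}}(Y)}\sim\|f\|_2$, so there is no gain in $N$ at all. The correct linear statement (Theorem \ref{thm-LRS}) gains only in $\sigma$, the number of significant cubes per \emph{horizontal $R^{1/2}$-slab}, and your pigeonholing never introduces this parameter: you sort by the number of significant cubes above each spatial cube $q$ (a vertical multiplicity), which is not the quantity the refined Strichartz sees. Consequently the crucial counting step of the paper's proof is missing: because the maximal function selects one time per $x$, one combines the gain $\sigma^{-\frac{1}{n+2}}$ with the trivial bound $N\lesssim R^{1/2}\sigma$ (there are only $R^{1/2}$ time slabs) and with $\mu_R(\mathrm{Proj}(Q_j))\lesssim R^{\al/2}$ to get $\sigma^{-\frac{1}{n+2}}(R^{1/2}\sigma R^{\al/2})^{\frac{1}{n+2}}=R^{\frac{\al+1}{2(n+2)}}$; without the slab parameter this cancellation, and hence the exponent, cannot be produced by ``optimizing free parameters.''

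Your Step 3 cannot repair this. A Bourgain--Guth broad--narrow analysis feeding a multilinear refined Strichartz (Theorem \ref{thm-kRS}, whose $N^{-\frac{k-1}{k(n+2)}}$ gain genuinely requires $k\geq 2$ transversality) into the broad part still leaves the narrow part, where after rescaling one faces again the single-cap/wave-packet scenario with no possible $N$-gain, so the linear-in-$N$ estimate you want is simply false and no induction closes it. In the paper the multilinear refined Strichartz is proved but \emph{not} used for this theorem; the linear Theorem \ref{thm-LRS} is proved directly by Bourgain--Demeter decoupling into $R^{3/4}\times\cdots\times R^{3/4}\times R$ boxes, parabolic rescaling, and induction on the radius, with the slab structure preserved at each stage --- no broad--narrow decomposition or multilinear Kakeya is needed there. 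To fix your argument, replace your third pigeonholing parameter by ``number of significant cubes in the horizontal slab containing $Q$,'' state the refined Strichartz with the $\sigma^{-\frac{1}{n+2}}$ gain, and then run the slab count $N\lesssim R^{1/2}\sigma$ as above.
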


Denote $d\mu_R(x):=R^\al d\mu(x/R)$. We write $A\lessapprox B$ if $A\leq C_\e R^\e B$ for any $\e>0$.
By a localization argument (see \cite[Lemma 2.3]{sL}), Littlewood-Paley decomposition and parabolic rescaling, Theorem \ref{sme-1} can be reduced to the following:

\begin{theorem} \label{thm-linL2-al}
Let $n\geq 3,\al\in(0,n]$ and $\mu$ be $\al$-dimensional. Then
\begin{equation} \label{linL2-al}
\big\|\sup_{0<t<R} |\eit f|\big\|_{L^2(d\mu_R)} \lessapprox R^{\frac{\al+1}{2(n+2)}} \|f\|_2\,,
\end{equation}
whenever $R\geq 1$ and $f$ has Fourier support in $A(1):=\{\xi\in \ZR^n:|\xi|\sim 1\}$.
\end{theorem}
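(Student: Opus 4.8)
The plan is to run the standard reduction for Schr\"odinger maximal estimates against fractal measures --- linearize the maximal operator and pass to spacetime --- and then feed in a genuinely refined Strichartz bound rather than the naive one. First I would linearize: pick a measurable $t\colon B(0,R)\to(0,R)$ with $|e^{it(x)\Delta}f(x)|$ comparable to $\sup_{0<t<R}|e^{it\Delta}f(x)|$, and push $d\mu_R$ forward by the graph map $x\mapsto(x,t(x))$. This produces a measure $\nu$ on $\mathbb{R}^{n+1}$, supported in a ball of radius $\lesssim R$, with total mass $R^\alpha$ and $\nu(B^{n+1}(z,r))\le C_\mu r^\alpha$ for all $z,r$ (the graph being only $n$-dimensional is harmless since $\alpha\le n$). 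Writing $Ef$ for the extension operator on $\mathbb{R}^{n+1}$, the theorem reduces to showing $\int_{\mathbb{R}^{n+1}}|Ef|^2\,d\nu\lessapprox R^{\frac{\alpha+1}{n+2}}\|f\|_2^2$ whenever $\widehat f$ is supported in $A(1)$.

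Let $p=\frac{2(n+2)}{n}$, the Strichartz and $\ell^2$-decoupling exponent for the paraboloid in $\mathbb{R}^{n+1}$. Dyadically pigeonhole the level $\lambda$ of the linearized maximal function, and let $\mathcal Q$ be the $R^{1/2}$-cubes meeting the portion of the graph where $|Ef|\sim\lambda$; since $\widehat{Ef}$ lives in a bounded set, $|Ef|$ is essentially constant at unit scale, so $\int_Q|Ef|^p\gtrsim\lambda^p$ for each $Q\in\mathcal Q$. After also pigeonholing so that $\nu(Q)$ and $\int_Q|Ef|^p$ are essentially constant over $\mathcal Q$ and discarding negligible contributions, one has $\int|Ef|^2\,d\nu\lessapprox\lambda^2\,\nu(Q)\,\#\mathcal Q$, while $\|Ef\|_{L^p(Y)}^p\approx\#\mathcal Q\cdot\int_Q|Ef|^p\gtrsim\lambda^p\,\#\mathcal Q$ with $Y=\bigcup_{Q\in\mathcal Q}Q$. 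Thus everything reduces to an upper bound for $\|Ef\|_{L^p(Y)}$. The $\alpha$-dimensionality of $\nu$ now forces $\mathcal Q$ to be spread out at every scale: since $\nu(Q)$ is pinned, every ball of radius $r\ge R^{1/2}$ contains $\lesssim C_\mu r^\alpha/\nu(Q)$ cubes of $\mathcal Q$. A bare application of $\ell^2$-decoupling --- or even a refined decoupling estimate of the shape $\|Ef\|_{L^p(Y)}\lessapprox(\#\mathcal Q)^{1/2-1/p}\|f\|_2$ --- beats the trivial Strichartz bound but falls well short of the target; what is really needed is a refined Strichartz estimate that is sensitive to the spreading of $\mathcal Q$ at \emph{every} scale. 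Heuristically this is where $\frac{\alpha+1}{2(n+2)}$ comes from: a slab of thickness $R^{1/2}$ and length $R$ can meet only $\lesssim C_\mu R^{(\alpha+1)/2}/\nu(Q)$ cubes of $\mathcal Q$, and it is this one-dimensional concentration that one exploits.

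The crux is exactly this spread-out refined Strichartz estimate, which I would establish in two stages. First I would prove a multilinear refined Strichartz estimate by a multiscale argument (induction on $R$) combining $\ell^2$-decoupling for the paraboloid (Bourgain--Demeter) with the multilinear Kakeya inequality (Bennett--Carbery--Tao): decoupling reduces matters to a scale at which the relevant wave packets behave like straight tubes, and multilinear Kakeya then bounds how many of the spread-out $R^{1/2}$-cubes a family of pairwise transverse such tubes can simultaneously meet. Then I would pass from the multilinear to the linear refined Strichartz estimate by a Bourgain--Guth broad--narrow dichotomy: where $Ef$ is broad one has quantitative transversality among the significant caps and invokes the multilinear estimate, while where $Ef$ is narrow the mass concentrates near a lower-dimensional slab of frequencies, which one parabolically rescales and handles by the inductive hypothesis; summing the broad and narrow contributions closes the argument.

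The genuine difficulty, I expect, is twofold. One part is identifying the correct form of the refined Strichartz estimate: it must combine cleanly with the $\alpha$-dimensional hypothesis through the spreading-at-every-scale mechanism, remain stable under the broad--narrow recursion, and survive parabolic rescaling, all at once --- arranging the multilinear Kakeya count so that one statement does all of this is the main conceptual obstacle. The other part is the bookkeeping: propagating the $R^\epsilon$ losses and constants through the induction on scales so that the final exponent comes out exactly as $\frac{\alpha+1}{2(n+2)}$, and in particular checking that the additional time variable accounts for precisely the extra $+1$ in the numerator. By comparison, the linearization, the pigeonholing, and the reduction to a spacetime $L^p$ bound over a spread-out set are routine.
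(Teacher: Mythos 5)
Your overall skeleton (pigeonhole the level set, pass to a union $Y$ of $R^{1/2}$-cubes, and feed in a refined Strichartz estimate) matches the paper, but as set up your reduction does not close numerically, and the gap is precisely at the step you label routine. You pair $\lambda^{p}$ with the \emph{number} of cubes only: from $\int|Ef|^2\,d\nu\lessapprox\lambda^2\nu(Q)\,\#\mathcal Q$ and $\|Ef\|_{L^{p_{n+1}}(Y)}^{p_{n+1}}\gtrsim\lambda^{p_{n+1}}\#\mathcal Q$ you get $\int|Ef|^2\,d\nu\lessapprox\nu(Q)\,(\#\mathcal Q)^{\frac{2}{n+2}}\|Ef\|^2_{L^{p_{n+1}}(Y)}$, and even inserting the sharp linear refined Strichartz of Theorem \ref{thm-LRS} together with $N=\#\mathcal Q\lesssim\sigma R^{1/2}$ and $\nu(Q)\lesssim C_\mu R^{\alpha/2}$ this only yields $R^{\frac{1}{n+2}+\frac{\alpha}{2}}\|f\|_2^2$, off the target $R^{\frac{\alpha+1}{n+2}}\|f\|_2^2$ by a factor $R^{\frac{n\alpha}{2(n+2)}}$. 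The paper's proof avoids this loss through \eqref{Lp}: since the set $X$ has at most one unit ball over each vertical column and each unit ball carries $\mu_R$-measure $O(C_\mu)$, one pairs $H^{p_{n+1}}$ against the \emph{full} measure, $H^{p_{n+1}}\mu_R(A_H\cap\mathrm{Proj}(Y))\lesssim\|\eit f\|^{p_{n+1}}_{L^{p_{n+1}}(Y)}$, and then $\mu_R(\mathrm{Proj}(Y))\lesssim NR^{\alpha/2}\lesssim\sigma R^{(\alpha+1)/2}$ makes the $\sigma$'s cancel exactly against the gain $\sigma^{-\frac1{n+2}}$. Your proposed substitute, a refined Strichartz ``sensitive to spreading at every scale'' motivated by the claim that a slab of thickness $R^{1/2}$ and length $R$ meets $\lesssim C_\mu R^{(\alpha+1)/2}/\nu(Q)$ cubes, does not repair this: if ``slab'' means the horizontal time-slabs appearing in Theorem \ref{thm-LRS} the claim is false (take the linearizing time $t(x)$ constant, so essentially all of $\nu$, of mass up to $R^\alpha$, sits in a single slab), while if it means an $R^{1/2}\times\cdots\times R^{1/2}\times R$ tube the count is true but no estimate exploiting it is formulated or proved, and it still would not rescue the lossy pairing above. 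In fact no multi-scale information about $\mu$ is needed: the dimension hypothesis enters only at unit scale (measure $O(1)$ per unit ball) and at scale $R^{1/2}$ (measure $\lesssim R^{\alpha/2}$ per cube projection), combined with the trivial count \eqref{N}, $N\lesssim\sigma R^{1/2}$, from the $R^{1/2}$ horizontal slabs.

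Separately, your route to the crux estimate is both unexecuted and different from the paper's. You propose to obtain the linear refined Strichartz from a multilinear one (decoupling plus multilinear Kakeya) via a Bourgain--Guth broad--narrow reduction. The paper proves the linear estimate, Theorem \ref{thm-LRS}, directly: Bourgain--Demeter $l^2$ decoupling on each $R^{1/2}$-cube into boxes $\Box$ of dimensions $R^{3/4}\times\cdots\times R^{3/4}\times R$, dyadic pigeonholing of tubes inside each $\Box$, the geometric estimate \eqref{geom1} that $|Y_\Box\cap Y|\lesssim(\sigma_\Box/\sigma)|Y|$ because each $Y_\Box$ contributes $\lesssim\sigma_\Box$ cubes per horizontal row, and induction on scale after parabolic rescaling; multilinear Kakeya only appears in the separate Theorem \ref{thm-kRS}, which is \emph{not} used in the proof of Theorem \ref{thm-linL2-al}. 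Note also that the gain in Theorem \ref{thm-LRS} is in the per-slab count $\sigma$, not in the total number $N$ that a multilinear statement naturally sees, so recovering the sharp $\sigma^{-\frac1{n+2}}$ through a broad--narrow recursion (with its narrow, lower-dimensional contributions) is itself a nontrivial claim for which you give no argument.
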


The key ingredient in our proof is linear refined Strichartz estimate. Linear and bilinear refined Strichartz were derived in \cite{DGL} to solve the pointwise convergence problem in two dimensions. In \cite{DGOWWZ}, via polynomial partitioning developed in \cite{lG,lG16} and linear and bilinear refined Strichartz, some new weighted restriction estimates were established, and as applications improved results were obtained for the Falconer distance set problem and the spherical average decay rates of the Fourier transform of fractal measures. In this article, we prove a multilinear refined Strichartz (see Theorem \ref{thm-kRS}) using decoupling and multilinear Kakeya. The multilinear refined Strichartz may have its own interest. It is also interesting to think about how to exploit this estimate to further improve the weighted restriction and the Schr\"odiner maximal estimates in higher dimensions.

In Section \ref{wpd}, we recall wave packet decomposition briefly. We prove Theorem \ref{thm-linL2-al} in Section \ref{sec:LRS} using linear refined Strichartz estimate. In Section \ref{sec:kRS} we prove a multilinear refined Strichartz.

\begin{acknowledgement}
The work of X.D. is supported by the National Science Foundation under Grant No. 1638352 and the Shiing-Shen Chern Fund. L.G. is supported by a Simons Investigator Award. The work of R.Z. is supported by the National Science Foundation under Grant No. 1638352 and the James D. Wolfensohn Fund.
\end{acknowledgement}

\section{Wave packet decomposition} \label{wpd}
\setcounter{equation}0

We use the same setup as in \cite{lG16,DGL}, which we briefly recall. Let $f$ be a function with Fourier support in the unit ball $B^{n}(0,1)$. We break up $f$ into pieces $f_{\theta,\nu}$ that are localized in both position and frequency.
Cover $B^n(0,1)$ by finitely overlapping balls $\theta$ of radius $R^{-1/2}$ and cover $\ZR^n$ by finitely overlapping balls of radius $R^{\frac{1+\delta}{2}}$, centered at $\nu \in R^{\frac{1+\delta}{2}}\ZZ^n$. Here $\delta=\e^2$ is a small parameter. Using partition of unity, we have a decomposition
$$
f=\sum_{(\theta,\nu)\in \ZT} f_{\theta,\nu} + {\rm RapDec}(R)\|f\|_2\,,
$$
where $f_{\theta,\nu}$ is Fourier supported in $\theta$ and has physical support essentially in a ball of radius $R^{1/2+\delta}$ around $\nu$. The functions $f_{\theta,\nu}$ are approximately orthogonal. For any set $\ZT'$ of pairs $(\theta,\nu)$, we have
$$
\big\|\sum_{(\theta,\nu)\in \ZT'} f_{\theta,\nu}\big\|_2^2
\sim \sum_{(\theta,\nu)\in \ZT'} \|f_{\theta,\nu}\|_2^2\,.
$$
For each pair $(\theta,\nu)$, the restriction of $\eit f_{\theta,\nu}$ to $B^{n+1}_R$ is essentially supported on a tube $T_{\theta,\nu}$ with radius $R^{1/2+\delta}$ and length $R$, with direction $G(\theta)\in S^{n}$ determined by $\theta$ and location determined by $\nu$, more precisely,
$$
T_{\theta,\nu} :=\left\{(x,t) \in B^{n+1}_R : |x+2t\omega_\theta -\nu|\leq R^{1/2+\delta}\right\}\,.
$$
Here $\omega_\theta \in B^n(0,1)$ is the center of $\theta$, and 
$$
G(\theta)=\frac{(-2\omega_\theta,1)}{|(-2\omega_\theta,1)|}\,.
$$

In our discussion of refined Strichartz estimates, we will use the concept of a wave packet being tangent to an algebraic variety. Let $m$ be a dimension in the range $1\leq m\leq n+1$. We write $Z(P_1,\cdots,P_{n+1-m})$ for the set of common zeros of the polynomials $P_1,\cdots,P_{n+1-m}$ on $\ZR^{n+1}$. The variety $Z(P_1,\cdots,P_{n+1-m})$ is a transverse complete intersection if 
$$
\nabla P_1(x) \wedge \cdots \wedge \nabla P_{n+1-m}(x) \neq 0 \text{ for all } x\in Z(P_1,\cdots,P_{n+1-m})\,.
$$
Suppose that $Z$ is an algebraic variety. For any tile $(\theta,\nu) \in\ZT$,
we say that $T_{\theta,\nu}$ is \emph{$E R^{-1/2}$-tangent} to $Z$ if
$$T_{\theta,\nu}\subset N_{E R^{1/2}}Z \cap B^{n+1}_R,\quad and$$
\begin{equation*}
 \text{Angle}(G(\theta),T_zZ)\leq E R^{-1/2}
\end{equation*}
for any non-singular point $z\in N_{2 E R^{1/2}} ( T_{\theta,\nu}) \cap 2B^{n+1}_R \cap Z$.

Let
$$
\ZT_Z (E):=\{(\theta,\nu)\,|\,T_{\theta,\nu} \text{ is $E R^{-1/2}$-tangent to}\, Z\}\,,
$$
and  we say that $f$ is concentrated in wave packets from $\ZT_Z(E)$ if
$$
 \sum_{(\theta,\nu)\notin \ZT_Z(E)} \|f_{\theta,\nu}\|_2 \leq {\rm RapDec}(R)\|f\|_2.
$$
Since the radius of $T_{\theta, \nu}$ is $R^{1/2 + \delta}$, $R^\delta$ is the smallest interesting value of $E$.

\section{Linear refined Strichartz and proof of Theorem \ref{thm-linL2-al}}
\label{sec:LRS}
\setcounter{equation}0

In this section, we prove Theorem \ref{thm-linL2-al} using linear refined Strichartz estimates developed in \cite{DGL}.

\begin{theorem} [Linear refined Strichartz in dimension $n+1$] \label{thm-LRS}

Let $p_{n+1}=\frac{2(n+2)}{n}$. Suppose that $f: \ZR^n  \rightarrow \ZC$ has frequency supported in $B^n(0,1)$.  Suppose that $Q_1, Q_2, ...$ are lattice  $R^{1/2}$-cubes in $B_R^{n+1}$, so that
$$
\| \eit f \|_{L^{p_{n+1}}(Q_j)}  \textrm{ is essentially constant in $j$}. 
$$
Suppose that these cubes are arranged in horizontal slabs of the form $\ZR \times \cdots \times \ZR\times \{t_0, t_0 + R^{1/2} \}$, and that each such slab contains $\sim \sigma$ cubes $Q_j$.  Let $Y$ denote $\bigcup_j Q_j$.  Then
for any $\eps > 0$, 
\begin{equation} \label{LRS}
\| \eit f \|_{L^{p_{n+1}}(Y)} \le C_\e R^\e \sigma^{-\frac{1}{n+2}} \| f \|_{L^2}. \end{equation}
\end{theorem}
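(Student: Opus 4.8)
The plan is to establish \eqref{LRS} by induction on the scale $R$, following the scheme of \cite{DGL}: combine the $\ell^2$ decoupling theorem of Bourgain--Demeter for the paraboloid with parabolic rescaling. The base case is essentially the classical Strichartz inequality $\|\eit f\|_{L^{p_{n+1}}(B^{n+1}_R)}\lesssim\|f\|_2$, which is available precisely because $p_{n+1}=\tfrac{2(n+2)}{n}$ is the sharp Schr\"odinger--Strichartz exponent; since $\sigma\gtrsim 1$ always, this already covers the case $\sigma\sim 1$ (and the case $R=O(1)$).

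For the inductive step I would first normalize the configuration by dyadic pigeonholing. After removing contributions of size ${\rm RapDec}(R)\|f\|_2$, I may assume: each active cube $Q_j$ receives essential contributions from $\sim\mu$ wave packets (caps $\theta$ of radius $R^{-1/2}$ as in Section \ref{wpd}); the numbers $\|\eit f_\theta\|_{L^{p_{n+1}}(Q_j)}$ are essentially constant over the relevant pairs; and, as hypothesized, every horizontal $R^{1/2}$-slab meeting $Y$ contains $\sim\sigma$ of the $Q_j$, so $N:=\#\{\text{active }Q_j\}\sim(\#\text{slabs})\cdot\sigma$. Next, on each $Q_j$ individually, I would apply $\ell^2 L^{p_{n+1}}$ decoupling at an intermediate scale --- for instance into caps $\tau$ of radius $R^{-1/4}$, which is legitimate on a ball of radius $|Q_j|=R^{1/2}$ --- to get
\[
\|\eit f\|_{L^{p_{n+1}}(Q_j)}\lessapprox\Big(\sum_\tau\|\eit f_\tau\|_{L^{p_{n+1}}(w_{Q_j})}^2\Big)^{1/2},
\]
raise to the power $p_{n+1}$, sum over $j$, and use the pigeonholed structure to reduce everything to a bound for $\sum_\tau\|\eit f_\tau\|_{L^{p_{n+1}}(w_Y)}^{p_{n+1}}$. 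For each fixed $\tau$ I would parabolically rescale so that $\tau$ becomes the unit cap: the scale $R$ becomes $R^{1/2}$, the cubes of $Y$ carrying $\eit f_\tau$ become cubes at the new critical scale, and the horizontal slabs become horizontal slabs at the new scale carrying $\sim\sigma_\tau$ cubes each, where $\sigma_\tau$ is determined by how the plank geometry of $\tau$ meets $Y$. Applying the induction hypothesis at scale $R^{1/2}$ to each rescaled $f_\tau$ and then summing over $\tau$ via $\sum_\tau\|f_\tau\|_2^2\sim\|f\|_2^2$ should close the induction; since the scale halves (in the exponent) at each step, only $O(\log\log R)$ steps occur and the accumulated $R^\e$ factors stay harmless.

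The main obstacle is the combinatorial bookkeeping that pins down the exponent $\tfrac1{n+2}$ exactly: one must verify that the spatial sparsity of $Y$ recorded by $\sigma$ is transferred faithfully to the rescaled parameters $\sigma_\tau$ (and $\mu$), and that summing the inductive bounds $\sigma_\tau^{-1/(n+2)}\|f_\tau\|_2$ over $\tau$ reconstitutes $\sigma^{-1/(n+2)}\|f\|_2$ rather than a weaker power. This is where the value of $p_{n+1}$ is used crucially --- the identity $\tfrac{1}{p_{n+1}}+\tfrac{1}{n+2}=\tfrac12$ makes the desired estimate equivalent to an $L^2$-type statement on each horizontal slab, which is precisely what lets the slab count interact correctly with decoupling. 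A secondary, routine difficulty is that decoupling lives on balls, so every use must be localized to an $R^{1/2}$-cube and reassembled, and the weights $w_{Q_j}$ together with the ${\rm RapDec}(R)$ tails must be carried carefully through the rescaling.
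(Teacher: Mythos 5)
Your overall framework is the same as the paper's (Bourgain--Demeter decoupling into $R^{-1/4}$-caps on each $R^{1/2}$-cube, parabolic rescaling, induction on the scale $R\to R^{1/2}$), but the step you defer as ``combinatorial bookkeeping'' is in fact the heart of the proof, and the mechanism you sketch for it would not work. You hope that after rescaling each cap $\tau$ the sparsity hypothesis transfers, i.e.\ that the per-slab count $\sigma_\tau$ seen by $f_\tau$ is comparable to $\sigma$, so that summing the inductive bounds $\sigma_\tau^{-1/(n+2)}\|f_\tau\|_2$ in $\ell^2$ reconstitutes $\sigma^{-1/(n+2)}\|f\|_2$. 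This is false in general: $Y$ can be spread out ($\sigma$ large) while each individual box sees only a few of its tubes per slab ($\sigma_\Box$ small). The actual compensating mechanism in the paper is different. One first localizes in physical space to balls $B$ of radius $R^{3/4}$, so that the relevant pieces are $f_{\Box_{\tau,B}}$ living on $R^{3/4}\times\cdots\times R^{3/4}\times R$ boxes $\Box$ (this localization is also what makes the parabolic rescaling of $\Box$ land on a ball of the correct scale $R^{1/2}$ for the induction; rescaling $f_\tau$ globally does not). One then pigeonholes, inside each $\Box$, the $R^{1/2}\times\cdots\times R^{1/2}\times R^{3/4}$-tubes $S$ by the size of $\|\eit f_\Box\|_{L^{p_{n+1}}(S)}$ and by the number $\sigma_\Box$ of such tubes per $R^{3/4}$-slab, obtaining sets $Y_\Box$, and proves the geometric estimate
\begin{equation*}
|Y_\Box\cap Y|\lesssim \frac{\sigma_\Box}{\sigma}\,|Y|,
\end{equation*}
which follows because the long axis of $\Box$ makes an angle bounded away from $\pi/2$ with the $t$-axis, so $Y_\Box$ contains $\lesssim\sigma_\Box$ cubes in any horizontal $R^{1/2}$-row, while $Y$ contains $\sim\sigma$ per row. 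This yields the multiplicity bound $\mu\lessapprox(\sigma_\Box/\sigma)|\mathbb{B}|$ for the number of sets $Y_\Box$ containing a typical cube of $Y$ (note: the relevant multiplicity is over the pigeonholed sets $Y_\Box$ at the intermediate scale, not over $R^{-1/2}$-wave packets as in your setup). It is exactly this bound that, after passing from the $\ell^2$ decoupling inequality to an $\ell^{p_{n+1}}$ sum via H\"older at cost $\mu^{2/n}$, cancels the unknown $\sigma_\Box$ against the inductive factor $\sigma_\Box^{-2/n}$ and leaves $\sigma^{-2/n}|\mathbb{B}|^{2/n}$, which the constancy of $\|f_\Box\|_2$ over $\mathbb{B}$ and orthogonality convert into $\sigma^{-2/n}\|f\|_2^{p_{n+1}}$. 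Your heuristic that the identity $\tfrac1{p_{n+1}}+\tfrac1{n+2}=\tfrac12$ reduces matters to an $L^2$ statement on each slab is not the mechanism that closes the induction.

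A further point to repair in your write-up: the passage from decoupling on each $Q_j$ to a bound involving $\sum_\tau\|\eit f_\tau\|_{L^{p_{n+1}}(w_Y)}^{p_{n+1}}$ costs precisely the multiplicity factor discussed above, and unless the integration domains on the right are the pigeonholed sets $Y_\Box$ (on which the tube-level constancy of $\|\eit f_\Box\|_{L^{p_{n+1}}(S)}$ lets the induction hypothesis apply with the parameter $\sigma_\Box$), the inductive hypothesis cannot be invoked at all, since $Y$ itself need not satisfy the per-slab hypothesis for $f_\tau$ after rescaling. So the missing ingredients are concrete: the $(\tau,B)$ physical--frequency decomposition, the dyadic pigeonholing producing $Y_{\Box,\lambda,\eta}$ uniformly over a large fraction of the cubes of $Y$, the geometric estimate above, and the resulting multiplicity bound.
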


\begin{figure}[H]
\centering
\includegraphics[scale=.6]{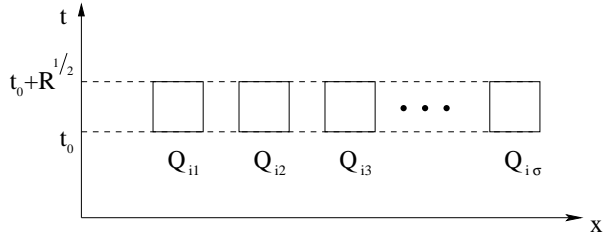}
\caption{\small{ $\sim \sigma$ many cubes in a horizontal slab }}
\label{figure:Fig-strip}
\end{figure}

Theorem \ref{thm-LRS} was proved in \cite{DGL} in dimension $2$, using Bourgain-Demeter $l^2$-decoupling theorem \cite{BD} and induction on scales. The proof of Theorem \ref{thm-LRS} in higher dimensions is similar and we will present the proof in Section \ref{sec:kRS}.

It follows from the Strichartz inequality that $\| \eit f \|_{L^{p_{n+1}}(Y)} \lesssim \| f \|_{L^2}$.  We get an
improvement when $\sigma$ is large.  The condition that $\sigma$ is large forces the solution $\eit f$ to be spread out in space. 

This linear refined Strichartz estimate is sharp. To see this, consider the following example.  Suppose that $\eit f$ is a sum of $\sigma$ wave packets supported on disjoint $R^{1/2} \times \cdots 
\times R^{1/2}\times R$-tubes.  We can take $Y$ to be the union of these tubes.  By scaling, we can suppose that $| \eit f | \sim 1$ on these $\sigma$ tubes and negligibly small elsewhere, and then a direct calculation shows that $\| \eit f \|_{L^{p_{n+1}}(Y)} \sim \| \eit f \|_{L^{p_{n+1}}(B^{n+1}_R)} \sim \sigma^{-1/(n+2)} \| f \|_{L^2}$.
So Theorem \ref{thm-LRS} roughly says that if $\eit f$ is ``as spread out as'' $\sigma$ disjoint wave packets, then its $L^{p_{n+1}}$ norm cannot be much bigger than the $L^{p_{n+1}}$ norm of $\sigma$ disjoint wave packets.

Now we prove Theorem \ref{thm-linL2-al} using linear refined Strichartz estimate:
\begin{proof}[Proof of Theorem \ref{thm-linL2-al}]

Let $n\geq 3, \al\in(0,n]$ and $\mu$ be $\al$-dimensional. We will show that
\begin{equation} \label{linL2-al'}
\big\|\sup_{0<t<R} |\eit f|\big\|_{L^2(d\mu_R)} \lessapprox R^{\frac{\al+1}{2(n+2)}} \|f\|_2
\end{equation}
holds for all $R\geq 1$ and all $f$ with Fourier support in $A(1):=\{\xi\in \ZR^n:|\xi|\sim 1\}$.

Without loss of generality we assume that $\|f\|_2=1$. Let $H$ be a dyadic number and denote
$$
A_H:=\big\{x\in B^n_R : \sup_{0<t< R} |\eit f(x)|\sim H\big\}\,.
$$
Note that we have a trivial bound $H\lesssim 1$ by H\"older's inequality. We also can assume that $R^{-C}<H$ for a large constant $C$, since the contributions from those $A_H$ with $H\leq R^{-C}$ are negligible. Therefore there are only $\sim \log R$ many relevant $H$ and we have
\begin{equation} \label{dya1}
 \big\| \sup_{0<t< R} |\eit f| \big\|_{L^2(d\mu_R)} \lessapprox H \big(\int_{A_H}d\mu_R(x)\big)^{1/2}, \quad \text{ for some dyadic $H$}\,.
\end{equation}
By viewing $|\eit f(x)|$ essentially as constant on unit balls,  we can cover $A_H$ by projection of a set $X$ described as follows: $X$ is a union of unit balls in $B^n_R\times [0,R]$ satisfying that each vertical thin tube of dimensions $1\times \cdots \times 1 \times R$ contains at most one unit ball in $X$, and 
$$
|\eit f(x)| \sim H \, \, \text{on} \,\,X\,.
$$
Next we decompose $B^n_R\times [0,R]$ into $R^{1/2}$-cubes $Q_j$ and consider those $Q_j$'s which intersect $X$. Let $\cy_{\la,\ga,\si}$ denote the collection of those $Q_j$'s such that 
\begin{align*}
\bullet \,&\text{$Q_j$ contains $\sim \la$ unit balls in $X$}\\
\bullet \,&\|\eit f\|_{L^{p_{n+1}}(Q_j)} \sim \ga \\
\bullet \,&\text{the horizontal $R^{1/2}$-slab containing $Q_j$ contains $\sim \si$ $R^{1/2}$-cubes satisfying} \\ 
&\text{the above two conditions}.
\end{align*}
Define $Y _{\la,\ga,\si}:= \bigcup_{Q_j \in \cy_{\la,\ga,\si}} Q_j$. Note that we can assume
$$
1\leq \la \leq R^{n/2}, \quad R^{-C}\leq  \ga \leq R^{C},\quad 1\leq \si \leq R^{n/2}\,,
$$
where $C$ is a large constant. Therefore there are only $\sim (\log R)^3$ many relevant dyadic $(\la,\ga,\si)$ and by \eqref{dya1} we have
\begin{equation} \label{dya2}
\big\| \sup_{0<t\leq R} |\eit f| \big\|_{L^2(d\mu_R)}
\lessapprox H \big(\int_{A_H\cap {\rm Proj}(Y)}d\mu_R(x)\big)^{1/2}\,,
\end{equation}
where $Y=Y_{\la,\ga,\si}$ for some $(\la,\ga,\si)$. Denote $Y:=\bigcup_{j=1}^N Q_j$, then
\begin{equation} \label{N}
N\lesssim R^{1/2}\si\,.
\end{equation} 
Since $|\eit f(x)|$ is essentially constant on unit balls, we have 
\begin{equation} \label{Lp}
H \big(\int_{A_H\cap{\rm Proj}(Y)}d\mu_R(x)\big)^{1/p_{n+1}} \lesssim \|\eit f(x)\|_{L^{p_{n+1}}(Y,dxdt)}\,.
\end{equation}
Now it follows from \eqref{dya2} and \eqref{Lp} that
\begin{equation*}
\big\| \sup_{0<t\leq R} |\eit f| \big\|_{L^2(d\mu_R)} 
\lessapprox  \| \eit f \|_{L^{p_{n+1}}(Y)} \big(\int_{A_H\cap {\rm Proj}(Y)}d\mu_R(x)\big)^{\frac{1}{n+2}} \,,
\end{equation*}
and by Theorem \ref{thm-LRS}, \eqref{N} and the assumption that $\mu$ is $\al$-dimensional, this is further controlled by
\begin{equation*}
\lessapprox \sigma^{-\frac{1}{n+2}}(NR^{\al/2})^{\frac{1}{n+2}} 
\lesssim \sigma^{-\frac{1}{n+2}}  (\si R^{1/2}R^{\al/2})^{\frac{1}{n+2}}
= R^{\frac{\al+1}{2(n+2)}} \,,
\end{equation*}
as desired.
\end{proof}

\section{Multilinear refined Strichartz estimate}\label{sec:kRS}
\setcounter{equation}0

\begin{definition} We say functions $f_i: \ZR^n  \rightarrow \ZC$, $i=1,2,\cdots,k$, have frequencies $k$-transversely supported  in $B^n(0,1)$, if for any points $\xi_i\in {\rm supp} \widehat{f_i} \subset B^n(0,1)$,
$$
|G(\xi_1)\wedge \cdots \wedge G(\xi_k)| \geq c>0\,,
$$
where $c$ is an absolute constant, and $G(\xi):=\frac{(-2\xi,1)}{|(-2\xi,1)|} \in S^n$.
\end{definition}

\begin{theorem} [$k$-linear refined Strichartz in dimension $n+1$] \label{thm-kRS}
Let $p_{n+1}=\frac{2(n+2)}{n}$ and $2\leq k\leq n+1$. Suppose that $f_i: \ZR^n  \rightarrow \ZC$, $i=1,2,\cdots,k$, have frequencies $k$-transversely  supported in $B^n(0,1)$. Suppose that $Q_1, Q_2,\cdots, Q_N$ are lattice  $R^{1/2}$-cubes in $B_R^{n+1}$, so that
$$
\| \eit f_i \|_{L^{p_{n+1}}(Q_j)}  \textrm{ is essentially constant in $j$, for each $i=1,2,\cdots,k$}. 
$$
Let $Y$ denote $\bigcup_{j=1}^N Q_j$.  Then
for any $\eps > 0$, 
\begin{equation} \label{kRS}
\big\| \prod_{i=1}^{k} |\eit f_i|^{\frac{1}{k}} \big\|_{L^{p_{n+1}}(Y)} \le C_\e R^\e N^{-\frac{k-1}{k(n+2)}} \prod_{i=1}^{k} \|f_i\|_2^{1/k}\,. 
\end{equation}
\end{theorem}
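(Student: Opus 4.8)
The plan is to reduce the $k$-linear refined Strichartz estimate \eqref{kRS} to the multilinear Kakeya/restriction machinery combined with $l^2$-decoupling, following the same scheme that proves the linear statement in Theorem \ref{thm-LRS} but exploiting transversality to gain a stronger packing bound. First I would perform a wave packet decomposition of each $f_i$ at scale $R^{-1/2}$ as in Section \ref{wpd}, writing $\eit f_i = \sum_{(\theta,\nu)\in\ZT^{(i)}} \eit f_{i,\theta,\nu}$ up to rapidly decaying error, so that each piece is concentrated on a tube $T_{\theta,\nu}$ of dimensions $R^{1/2+\delta}\times\cdots\times R^{1/2+\delta}\times R$ with direction $G(\theta)$. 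The $k$-transversality hypothesis on the frequency supports means that any $k$ tubes coming from distinct families $\ZT^{(1)},\dots,\ZT^{(k)}$ have directions $G(\theta_1),\dots,G(\theta_k)$ spanning a $k$-dimensional subspace with a quantitative lower bound $|G(\theta_1)\wedge\cdots\wedge G(\theta_k)|\gtrsim c$; this is exactly the hypothesis needed to invoke the multilinear Kakeya inequality of Bennett–Carbery–Tao (and its $k$-linear-in-$\ZR^{n+1}$ form).

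The core of the argument is an induction on the scale $R$, run in parallel with a single application of $l^2$-decoupling. At scale $R$, I would cover $B^{n+1}_R$ by balls $B_\rho$ of intermediate radius $\rho = R^{1/2}$ (or a suitable power), apply Bourgain–Demeter $l^2$-decoupling for the paraboloid at exponent $p_{n+1} = \frac{2(n+2)}{n}$ on each such ball to pass from $\eit f_i$ to its caps, then regroup the caps and rescale: on each $B_\rho$ the rescaled problem is again a $k$-linear refined Strichartz estimate at scale $R/\rho$, with the cube count $N$ replaced by the number of $(R/\rho)^{1/2}$-subcubes inside $B_\rho$ that meet $Y$. The key quantitative input that makes the $k$-linear exponent $N^{-\frac{k-1}{k(n+2)}}$ better than the linear $\sigma^{-\frac{1}{n+2}}$ is multilinear Kakeya: it controls, on any ball, the overlap function $\int \prod_i (\sum_{T\in\ZT^{(i)}} \chi_T)^{1/(k-1)}$ and hence forces the $N$ cubes in $Y$ on which all $k$ factors are simultaneously large to be spread among essentially $N^{1/(k-1)}$-many "directions' worth" in each family — this is the mechanism that upgrades a single-slab count of $\sigma$ to a global count of $N$ with the improved power. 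One then tracks the constant through the induction, checking that the $R^\varepsilon$ losses from decoupling accumulate only to a final $C_\varepsilon R^\varepsilon$, and that the base case (where $R$ is bounded, or where $N$ is $O(1)$) is trivial from Strichartz/Hölder.

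The main obstacle, as always in refined Strichartz arguments, is the bookkeeping in the induction on scales: one must keep the hypothesis "$\|\eit f_i\|_{L^{p_{n+1}}(Q_j)}$ essentially constant in $j$" (and the analogous constancy needed to run multilinear Kakeya cleanly) intact after rescaling, which typically forces a pigeonholing step at every scale to restore constancy, losing only $R^\varepsilon$ each time but requiring care that the number of scales is $O(\log R)$ so the total loss stays subpolynomial. A secondary technical point is correctly interfacing the $(n+1)$-dimensional spacetime geometry — where wave packets are genuine tubes and the transversality is in $S^n$ — with multilinear Kakeya, which is naturally stated for families of tubes in $\ZR^{n+1}$ with transverse directions; since $2\le k\le n+1$ this is exactly the regime the Bennett–Carbery–Tao theorem covers, but one must verify that the $ER^{-1/2}$-angular tolerance in the wave packet directions does not degrade the transversality constant $c$ through the iteration. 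Once these are handled, the estimate \eqref{kRS} follows by combining the per-scale decoupling gain with the multilinear Kakeya packing bound and summing the geometric series of $\varepsilon$-losses.
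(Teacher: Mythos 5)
Your proposal assembles the right ingredients (Bourgain--Demeter decoupling, induction on scales, Bennett--Carbery--Tao multilinear Kakeya), but the architecture you describe is not the one that works, and the step where transversality is supposed to produce the gain $N^{-\frac{k-1}{k(n+2)}}$ is a genuine gap. You propose to run the induction on scales directly on the $k$-linear quantity: decouple on balls $B_\rho$, ``regroup the caps and rescale,'' and claim the rescaled problem is again a $k$-linear refined Strichartz estimate at a smaller scale. But the rescaling in this game is a \emph{parabolic} rescaling adapted to a single frequency cap $\tau$ of one of the functions; applying it blows that cap up to unit scale and simultaneously sends the frequency supports of the other $f_j$'s far outside the unit ball, so the $k$-transversality hypothesis (and the whole normalized setup) is not inherited by the rescaled problem. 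Moreover, each factor $\eit f_i$ decouples into its own family of slabs/boxes $\Box_i$ with different orientations, and there is no single affine change of variables turning a cube $Q_j$'s simultaneous membership in $\Box_1,\dots,\Box_k$ into a smaller-scale multilinear configuration. Your description of what multilinear Kakeya gives (``the $N$ cubes are spread among $N^{1/(k-1)}$-many directions' worth in each family'') is not a quantitative statement that can be inserted into the induction, and nowhere is it explained how the per-slab count $\sigma$ of the linear theorem gets replaced by the global count $N$ with exponent $\frac{k-1}{k(n+2)}$.

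The paper's proof sidesteps all of this. It first applies H\"older, $\| \prod_i |\eit f_i|^{1/k}\|_{L^{p_{n+1}}(Y)} \le \prod_i \|\eit f_i\|_{L^{p_{n+1}}(Y)}^{1/k}$, and then runs the \emph{linear} argument of Theorem \ref{thm-LRS} separately for each $f_i$ (decoupling into $R^{-1/4}$-caps and $R^{3/4}\times\cdots\times R^{3/4}\times R$ boxes, pigeonholing, and the linear inductive estimate \eqref{indpar}), stopping at the intermediate bound \eqref{forthm2'}, which involves the multiplicities $\mu_i$, the per-slab counts $\sigma_{i,\Box}$ and the box families $\ZB_i$; no transversality is used in this analytic part, and the induction on scales is purely linear, so the rescaling issue above never arises. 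Transversality enters exactly once, at the top scale, through the purely geometric multiplicity estimate \eqref{geom-k}, namely $N\prod_i \mu_i^{1/(k-1)} \lessapprox \prod_i (\sigma_{i,\Box}|\ZB_i|)^{1/(k-1)}$, which replaces the linear bound \eqref{geom2} and is proved by two applications of Theorem \ref{MK}: once to the $k$ transverse families of $R^{1/2}\times\cdots\times R^{1/2}\times R^{3/4}$-tubes inside each $R^{3/4}$-ball, and once to the boxes $\Box$ viewed across the $R^{3/4}$-balls. Plugging \eqref{geom-k} into \eqref{forthm2'} gives \eqref{kRS} directly. If you want to salvage your outline, the concrete missing piece you must supply is this counting lemma \eqref{geom-k} (or an equivalent), together with a mechanism that keeps the multilinear gain outside the induction on scales rather than inside it; also note that the relevant intermediate scale is $R^{3/4}$ (boxes) rather than $R^{1/2}$-balls, and the $R^{1/2}$ wave packet decomposition you invoke plays no role in this particular proof.
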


Theorem \ref{thm-kRS}
was proved in \cite{DGL} for the case $k=2$ in dimension $2$. We will first present the proof of the linear refined Strichartz in Subsection \ref{sec:pfLRS}. And then in Subsection \ref{sec:pfMLRS} we prove Theorem \ref{thm-kRS}, by combining the proof of the linear case with a geometric estimate derived from Multilinear Kakeya.

\subsection{Proof of Theorem {\ref{thm-LRS}}} \label{sec:pfLRS}
The proof uses the Bourgain-Demeter $l^2$ decoupling theorem, together with induction on the radius and parabolic rescaling.  
First we recall the decoupling result of Bourgain and Demeter in \cite{BD}.
\begin{theorem}[Bourgain-Demeter]  \label{bourdem} Let $m\geq 2$ and $p_m:=\frac{2(m+1)}{m-1}$.
Suppose that the $R^{-1}$-neighborhood of the unit paraboloid in
$\ZR^{m}$ is divided into $R^{(m-1)/2}$ disjoint rectangular boxes $\tau$, each with dimensions $R^{-1/2}\times \cdots\times R^{-1/2} \times R^{-1}$.
Suppose $\widehat F_\tau$ is supported in $\tau$ and $F = \sum_\tau F_\tau$.
Then
$$ \| F \|_{L^{p_m}(\ZR^m)} \lessapprox \big( \sum_\tau \| F_\tau \|_{L^{p_m}(\ZR^m)}^2 \big)^{1/2}. $$
\end{theorem}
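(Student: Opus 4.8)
The statement to be proved, Theorem~\ref{bourdem}, is the Bourgain--Demeter $\ell^2$-decoupling theorem for the paraboloid; in this paper it is invoked as a black box, but here is the scheme by which I would prove it. Introduce the decoupling constant $\mathrm{Dec}_m(R,p)$, the smallest $C$ for which $\|F\|_{L^p(\ZR^m)}\le C\big(\sum_\tau\|F_\tau\|_{L^p(\ZR^m)}^2\big)^{1/2}$ holds for all $F$ with Fourier support in the $R^{-1}$-neighborhood of the paraboloid, $\tau$ ranging over the $R^{-1/2}$-caps. The claim is $\mathrm{Dec}_m(R,p)\lessapprox 1$ (that is, $\le C_\e R^\e$) for $2\le p\le p_m$. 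Since $L^2$-orthogonality gives $\mathrm{Dec}_m(R,2)\lesssim 1$ for free, by H\"older's inequality and interpolation it suffices to treat the critical exponent $p=p_m$; and I would argue by induction on the dimension $m$, with base case $m=2$ (decoupling for the parabola), assuming the full theorem in all dimensions $<m$. The three ingredients are: (i) a multilinear (transversal) decoupling inequality with essentially no loss, coming from multilinear Kakeya/restriction; (ii) a broad--narrow decomposition following Bourgain--Guth; and (iii) parabolic rescaling, used to iterate (i) and (ii) over a range of scales.

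For (i): if $\tau_1,\dots,\tau_m$ are $R^{-1/2}$-caps whose unit normals are quantitatively transversal, then
$$
\big\| \prod_{i=1}^{m}|F_{\tau_i}|^{1/m}\big\|_{L^{p_m}(B_R)}
\lessapprox \prod_{i=1}^{m}\Big(\sum_{\tau\subset\tau_i}\|F_\tau\|_{L^{p_m}}^2\Big)^{1/(2m)}.
$$
The point is the $\lessapprox$: transversality is exactly what converts the left side, via the Bennett--Carbery--Tao multilinear restriction inequality (equivalently the multilinear Kakeya inequality, which also underlies Section~\ref{sec:kRS} of the present paper), into a favorable count of overlaps of the dual tubes; combining this with $L^2$-orthogonality at all intermediate scales (glued together by rescaling) yields the stated bound with only an $R^\e$ loss, uniformly in $R$.

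For (ii) and (iii): fix a large parameter $K$ and cover the paraboloid by $K^{-1/2}$-caps $\alpha$. On a ball $B_K\subset B_R$ one has the Bourgain--Guth dichotomy. In the \emph{broad case} there are $m$ quantitatively transversal caps $\alpha_1,\dots,\alpha_m$ each carrying a comparable share of $\|F\|_{L^{p_m}(B_K)}$, and then the multilinear inequality from (i), applied after decoupling each $\alpha_i$ into $R^{-1/2}$-caps (which costs only a $K^{O(1)}$ factor for fixed $K$), gives the full conclusion. In the \emph{narrow case} all significant caps lie within $O(K^{-1/2})$ of some affine subspace $V$ of dimension $d\le m-1$, so the portion of the paraboloid near $V$ is, up to an affine change of variables, a lower-dimensional paraboloid (times a flat factor); the inductive hypothesis in dimension $<m$ then decouples it into $K^{-1/2}$-caps with an $R^\e$ loss, after which parabolic rescaling reduces the decoupling of each $K^{-1/2}$-cap into $R^{-1/2}$-caps to a decoupling problem at scale $R/K$, yielding $\mathrm{Dec}_m(R,p_m)\lessapprox K^{O(1)}\,\mathrm{Dec}_m(R/K,p_m)$ for the narrow contribution. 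Feeding the broad bound and this relation into a bootstrap over scales — e.g.\ iterating until the scale drops to $R^{1/2}$, so that only $O(1)$ steps occur when $K$ is a small power of $R$ — forces $\mathrm{Dec}_m(R,p_m)\le C_\e R^\e$.

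The main obstacle is precisely this last bookkeeping: one must verify that the narrow (lower-dimensional) contributions, which recurse through every intermediate dimension $d$ and every intermediate scale, accumulate at most an $R^\e$ loss in total, and that the transversality constant in the broad case stays quantitatively bounded below throughout the iteration — balancing $K$, $\e$, the number of iteration steps, and the $K^{O(1)}$ loss per step is the technical heart of the argument. A secondary point is the base case $m=2$, which runs the same scheme with multilinear Kakeya in the plane replaced by an elementary bilinear/transversality estimate. (The supercritical range $p>p_m$, not needed in this paper, would in addition require tracking the sharp power loss $R^{(m-1)/4-(m+1)/(2p)}$ via Bernstein's inequality.)
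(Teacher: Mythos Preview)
The paper does not prove Theorem~\ref{bourdem}; it is quoted from \cite{BD} and used as a black box, exactly as you note. So there is no ``paper's own proof'' to compare against, and your sketch is an outline of the Bourgain--Demeter argument itself. The ingredients you list --- multilinear decoupling from Bennett--Carbery--Tao, the Bourgain--Guth broad/narrow dichotomy, parabolic rescaling, and induction on dimension --- are indeed the ones used in \cite{BD}.

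That said, the recursion you write for the narrow contribution, $\mathrm{Dec}_m(R,p_m)\lessapprox K^{O(1)}\mathrm{Dec}_m(R/K,p_m)$, combined with your proposed iteration (``$O(1)$ steps when $K$ is a small power of $R$''), does not close: if $K=R^{\delta}$ and you iterate $\sim 1/\delta$ times, the accumulated loss is $(K^{O(1)})^{1/\delta}=R^{O(1)}$, not $R^{\e}$. The point you are missing is that the narrow step should carry only a $K^{\e}$ loss (coming from the lower-dimensional decoupling hypothesis at scale $K$), not $K^{O(1)}$; the $K^{O(1)}$ factor lives in the \emph{broad} term, which terminates rather than recurses. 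Even with that correction, the actual bootstrap in \cite{BD} is more delicate than a straight iteration of the broad/narrow split: one must interpolate between the linear and multilinear decoupling constants across a hierarchy of scales and show that any putative exponent $\gamma$ in $\mathrm{Dec}_m(R)\lesssim R^{\gamma}$ can be strictly improved. You correctly flag this bookkeeping as ``the technical heart,'' but as written your iteration scheme would not yield $R^{\e}$.
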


To set up the argument, we decompose $f$ as follows.  We break the unit ball $B^n(0,1)$ in frequency space into
small balls $\tau$ of radius $R^{-1/4}$, and divide the physical space ball $B^n_R$ into balls $B$ of
radius $R^{3/4}$.  For each pair $(\tau, B)$, we let $f_{\Box_{\tau,B}}$ be the function formed by cutting
 off $f$ on the ball $B$ (with a Schwartz tail) in physical space and the ball $\tau$ in Fourier space.
We note that $e^{it\Delta} f_{\Box_{\tau,B}}$, restricted to $B^{n+1}_R$, is essentially supported on an
 $R^{3/4} \times\cdots\times R^{3/4} \times R$-box, which we denote by $\Box_{\tau,B}$. The box $\Box_{\tau,B}$ is in the direction given by $(-2c(\tau),1)$ and intersects ${t=0}$ at a disk centered at $(c(B),0)$, where $c(\tau)$
and $c(B)$ are the centers of $\tau$ and $B$ respectively. For a fixed $\tau$, the different boxes $\Box_{\tau, B}$
tile $B^{n+1}_R$.  In particular, for each $\tau$, a given cube $Q_j$ lies in exactly one box $\Box_{\tau, B}$. Therefore, the decoupling theorem tells us that
\begin{equation}
\| e^{i t \Delta} f \|_{L^{p_{n+1}}(Q)} \lessapprox \big( \sum_\Box \| e^{ i t \Delta} f_\Box \|_{L^{p_{n+1}}(Q)}^2 \big)^{1/2}.
\end{equation}

The second ingredient is induction on the radius. Using parabolic rescaling and induction on the radius, we get a version of our main inequality for each function $f_\Box$. It goes as follows:

Suppose that $S_1, S_2, ...$ are $R^{1/2} \times\cdots\times R^{1/2} \times R^{3/4}$-tubes in $\Box$ (running parallel to the long axis of $\Box$), and that
$$ \| e^{i t \Delta} f_\Box \|_{L^{p_{n+1}}(S_j)} \textrm{ is essentially constant in $j$}. $$
Suppose that these tubes are arranged into $R^{3/4}$-slabs running parallel to the short axes of
$\Box$ and that each such slab contains $\sim \sigma_\Box$ tubes $S_j$.  Let $Y_\Box$ denote $\cup_j S_j$.  Then
\begin{equation} \label{indpar} \| e^{i t \Delta} f_\Box \|_{L^{p_{n+1}}(Y_\Box)} \lesssim  R^{\e/2} \sigma_\Box^{-\frac{1}{n+2}} \| f_\Box \|_{L^2}. \end{equation}

To apply this inequality, we need to identify a good choice of $Y_\Box$.  We do this by some dyadic pigeonholing.  For each $\Box$, we apply the following algorithm to regroup tubes in $\Box$:
\begin{enumerate}
\item We sort those $R^{1/2} \times\cdots\times R^{1/2} \times R^{3/4}$-tubes $S$ contained in the box $\Box$
according to the order of magnitude of $\| \eit f_\Box \|_{L^{p_{n+1}}(S)}$, which we denote $\lambda$.
For each dyadic number $\lambda$, we use $\mathbb{S}_\lambda$ to stand for the collection of tubes
$S \subset \Box$ with $\| \eit f_\Box \|_{L^{p_{n+1}}(S)} \sim \lambda$.
\item For each $\lambda$, we sort the tubes $S \in \mathbb{S}_\lambda$ by looking at the number of such
 tubes in an $R^{3/4}$-slab.  For any dyadic number $\eta$, we let $\mathbb{S}_{\lambda, \eta}$ be the set of tubes $S \in \mathbb{S}_\lambda$ so that the number of tubes of $\mathbb{S}_\lambda$ in the $R^{3/4}$-slab containing $S$ is $\sim \eta$.
\end{enumerate}

\begin{figure}  [H]
\centering
\includegraphics[scale=.3]{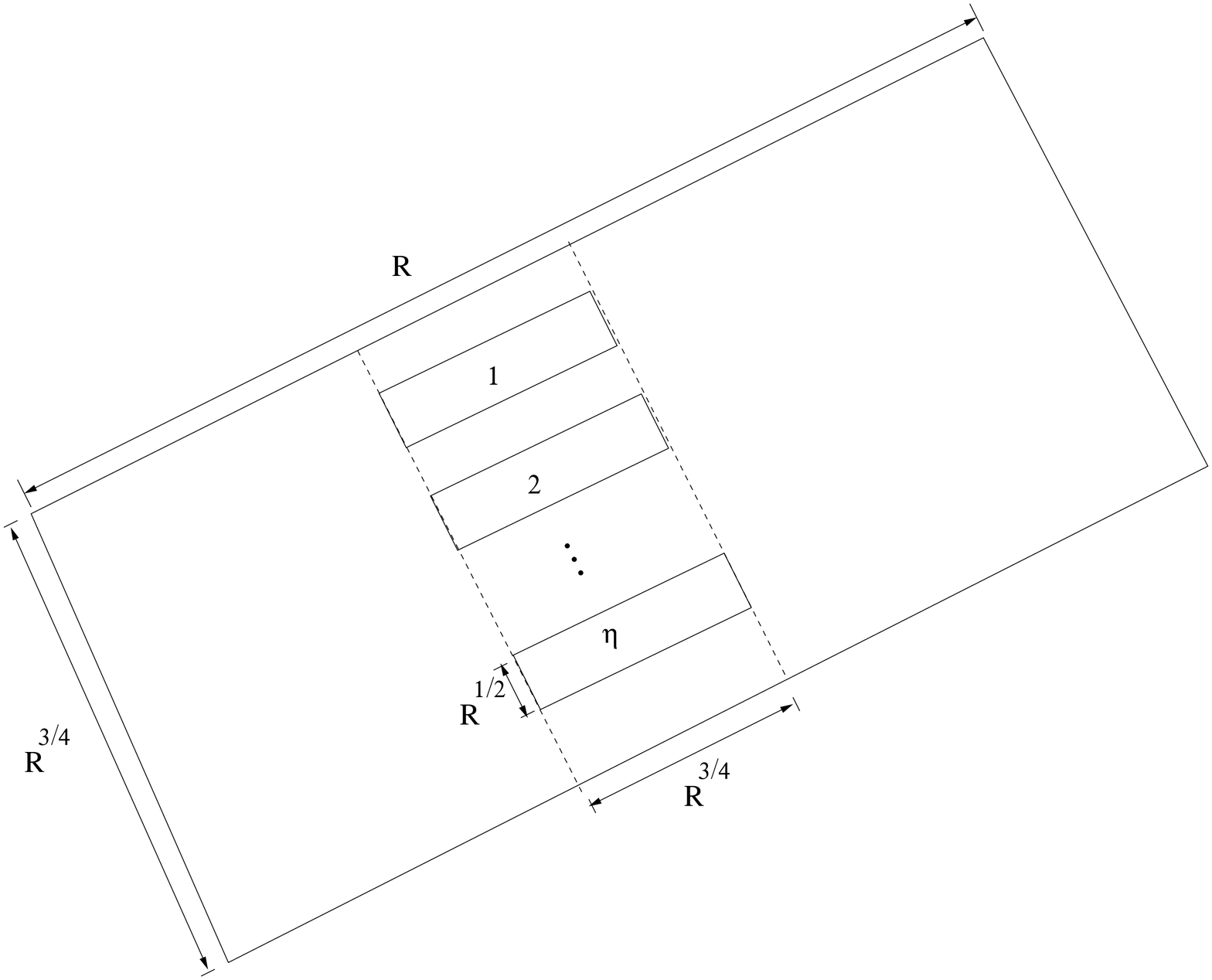}
\caption{\small{ Tubes in a given slab in the $\Box$  }}
\label{figure:Fig-box}
\end{figure}

We let $Y_{\Box, \lambda, \eta}$ be the union of the tubes in $\mathbb{S}_{\lambda, \eta}$. Then we represent
$$ \eit f = \sum_{\lambda, \eta} \big( \sum_\Box \eit f_\Box \cdot \hichi_{Y_{\Box, \lambda, \eta}} \big). $$
Since there are $\lesssim \log R$ choices for each of $\lambda, \eta$, we can choose $\lambda, \eta$ so that
\begin{equation} \label{dyadpig1}
\| \eit f \|_{L^{p_{n+1}}(Q_j)} \lesssim (\log R)^2 \big \| \sum_\Box \eit f_\Box \cdot\hichi_{Y_{\Box, \lambda, \eta}}\big \|_{L^{p_{n+1}}(Q_j)}
 \end{equation}
holds for a fraction $\approx 1$ of all cubes $Q_j$ in $Y$.
We need this uniform choice of $(\lambda, \eta)$, which is independent of $Q_j$,  because later we will sum over all $Q_j$ and arrive at $\|\eit f_\Box\|_{L^{p_{n+1}}(Y_{\Box,\lambda, \eta})}$. 

We fix $\lambda$ and $\eta$ for the rest of the proof.
Let $Y_\Box$ stand for the abbreviation of $Y_{\Box, \lambda, \eta}$.
We note that $Y_\Box$ obeys the hypotheses for our inductive estimate (\ref{indpar}), with $\sigma_\Box$ being the value of $\eta$ that we have fixed.

The following geometric estimate will play a crucial role in our proof.  Each set $Y_\Box$ contains $\lesssim \sigma_\Box$ tubes in each slab parallel to the short axes of $\Box$.  Since the angle between the short axes of $\Box$ and the $x$-axes is bounded away from $\pi/2$, it follows that $Y_\Box$ contains $\lesssim \sigma_\Box$ cubes $Q_j$ in any $R^{1/2}$-horizontal row. Therefore,
\begin{equation} \label{geom1}  | Y_\Box \cap Y | \lesssim \frac{\sigma_\Box}{\sigma} |Y|. \end{equation}

Next we sort the the boxes $\Box$ according to the dyadic size of $\| f_\Box \|_{L^2}$.  We can restrict matters
to $\lesssim \log R$ choices of this dyadic size, and so we can choose a set of $\Box$'s, $\mathbb{B}$,
so that $\| f_\Box \|_{L^2}$ is essentially constant for $\Box \in \mathbb{B}$ and
\begin{equation} \label{dyadpig2}
\| \eit f \|_{L^{p_{n+1}}(Q_j)} \lessapprox  \| \sum_{\Box \in \mathbb{B}} \eit f_\Box\cdot \hichi_{Y_{\Box}} \|_{L^{p_{n+1}}(Q_j)}
\end{equation}
for a fraction $\approx 1$ of cubes $Q_j$ in $Y$.

Finally we sort the cubes $Q_j \subset Y$ according to the number of $Y_\Box$ that contain them.
 We let $Y' \subset Y$ be a set of cubes $Q_j$ which obey (\ref{dyadpig2}) and which each lie in $\sim \mu$ of
the sets $\{ Y_\Box \}_{\Box \in \mathbb{B}}$.  Because (\ref{dyadpig2}) holds for a large fraction of cubes,
and because there are only dyadically many choices of $\mu$, $|Y'| \approx |Y|$.  By the equation (\ref{geom1}), we
see that
$$  | Y_\Box \cap Y' | \le |Y_\Box \cap Y| \lessapprox  \frac{\sigma_\Box}{\sigma} |Y| \approx \frac{\sigma_\Box}{\sigma} |Y'|. $$
Therefore, the multiplicity $\mu$ is bounded by
\begin{equation}\label{geom2}
\mu \lessapprox \frac{\sigma_\Box}{\sigma} | \mathbb{B} |.
\end{equation}

We now are ready to combine all our ingredients and finish our proof.  By decoupling, we have for each $Q_j \subset Y'$,
\begin{equation}
\begin{split}
\| \eit f \|_{L^{p_{n+1}}(Q_j)} \lessapprox &\big\| \sum_{\Box \in \mathbb{B}} \eit f_\Box \cdot\hichi_{Y_\Box} \big\|_{L^{p_{n+1}}(Q_j)}\\
\lessapprox & \big(   \sum_{\Box \in \mathbb{B}\,: \,Q_j \subset Y_\Box} \big\| \eit f_\Box \big\|_{L^{p_{n+1}}(Q_j)}^2 \big)^{1/2}.
\end{split}
\end{equation}
Since the number of $Y_\Box$ containing $Q_j$ is $\sim \mu$, we can apply H\"older to get 
$$  \big\| \sum_{\Box \in \mathbb{B}} \eit f_\Box \cdot \hichi_{Y_\Box} \big\|_{L^{p_{n+1}}(Q_j)} \lessapprox \mu^{\frac{1}{n+2}}
\big(   \sum_{\Box \in \mathbb{B}\,:\, Q_j \subset Y_\Box} \big\| \eit f_\Box \big\|_{L^{p_{n+1}}(Q_j)}^{p_{n+1}} \big)^{1/p_{n+1}}. $$
Now we raise to the $p_{n+1}$-th power and sum over $Q_j \subset Y'$ to get
$$  \left\| \eit f \right\|_{L^{p_{n+1}}(Y')}^{p_{n+1}} \lessapprox \mu^{\frac{2}{n}} \sum_{\Box \in \mathbb{B}}  \big\| \eit f_\Box
\big\|_{L^{p_{n+1}}(Y_\Box)}^{p_{n+1}}. $$
Since $|Y'| \gtrapprox |Y|$, and since each cube $Q_j \subset Y$ makes an equal contribution to $\| \eit f \|_{L^{p_{n+1}}(Y)}$, we see that
$\| \eit f \|_{L^{p_{n+1}}(Y)} \approx \| \eit f \|_{L^{p_{n+1}}(Y')}$ and so
$$  \left\| \eit f \right\|_{L^{p_{n+1}}(Y)}^{p_{n+1}} \lessapprox  \mu^{\frac{2}{n}} \sum_{\Box \in \mathbb{B}}
\left\| \eit f_\Box \right\|_{L^{p_{n+1}}(Y_\Box)}^{p_{n+1}}. $$

By a parabolic rescaling, Figure \ref{figure:Fig-box} becomes Figure \ref{figure:Fig-cubes}.
\begin{figure}  [H]
\centering
\includegraphics[scale=.4]{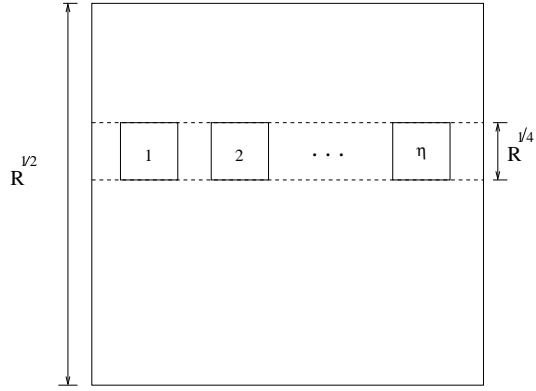}
\caption{\small{ Cubes in a given slab in an $R^{1/2}$-cube }}
\label{figure:Fig-cubes}
\end{figure}

\noindent Henceforth, applying our inductive hypothesis (\ref{indpar}) at scale $R^{1/2}$ to the right-hand side, we see that
\begin{equation} \label{forthm2}
 \left\| \eit f \right\|_{L^{p_{n+1}}(Y)}^{p_{n+1}}  \lessapprox \mu^{\frac{2}{n}} \sigma_\Box^{-\frac{2}{n}} \sum_{\Box \in \mathbb{B}}
\| f_\Box \|_{L^2}^{p_{n+1}}.
\end{equation}
Plugging in our bound for $\mu$ in (\ref{geom2}), this is bounded by
$$  \lesssim \sigma^{-\frac{2}{n}} | \mathbb{B}|^{\frac{2}{n}} \sum_{\Box \in \mathbb{B}} \| f_\Box \|_{L^2}^{p_{n+1}}. $$
Now since $\| f_\Box \|_{L^2}$ is essentially constant among all $\Box \in \mathbb{B}$, the last expression is
 $$ \sim \sigma^{-\frac{2}{n}} (\sum_{\Box \in \mathbb{B}} \| f_\Box \|_{L^2}^2)^{p_{n+1}/2} \le \sigma^{-\frac{2}{n}}
 \| f \|_{L^2}^{p_{n+1}}. $$
Taking the $p_{n+1}$-th root, we obtain our desired bound: 
$$ \| \eit f \|_{L^{p_{n+1}}(Y)} \lessapprox \sigma^{-\frac{1}{n+2}} \| f \|_{L^2}. $$
This closes the induction on radius and completes the proof.
 
\subsection{Proof of Theorem \ref{thm-kRS}} \label{sec:pfMLRS}

One key ingredient in our proof is Bennett-Carbery-Tao multilinear Kakeya estimates:

\begin{theorem} [see \cite{BCT} and \cite{G}] \label{MK}
Suppose that $S_j \subset S^{m-1}, j=1,\cdots,k$. Suppose that $l_{j,a}$ are lines in $\ZR^m$ and that the direction of $l_{j,a}$ lies in $S_j$. Suppose that for any vectors $v_j\in S_j$,
$$
|v_1\wedge \cdots \wedge v_k|\geq \nu.
$$
Let $T_{j,a}$ be the characteristic function of the $1$-neighborhood of $l_{j,a}$. Let $Q_s$ denote any cube of side length $S$. Then for any $\e>0$ and any $S\geq 1$, there holds
$$
\int_{Q_s} \prod_{j=1}^{k} \big(\sum_{a=1}^{N_j} T_{j,a}\big)^{1/(k-1)} \leq C_\e {\rm Poly}(\nu^{-1}) S^{\e} \prod_{j=1}^{k} N_j^{1/(k-1)}\,.
$$
\end{theorem}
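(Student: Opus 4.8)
The plan is to prove this by the ``short proof'' via induction on scales and the Loomis--Whitney inequality, in the spirit of Guth's argument \cite{G}; this is the route that produces exactly the stated $S^\e$ loss together with a polynomial dependence on $\nu^{-1}$. (The sharp endpoint $\e=0$ of \cite{BCT} comes instead from a heat-flow monotonicity argument; I will not pursue it, since the $\e$-loss version is what is needed and is far more elementary to sketch.) Write $\tau_{j,a}$ for the $1$-neighbourhood of $l_{j,a}$, so $T_{j,a}=\chi_{\tau_{j,a}}$, and let $Q_S$ denote a cube of side $S$. \textbf{Reductions.} First decompose each $S_j$ into caps of a small angular radius $\theta_0$, to be chosen at the end depending only on $\e$ and $m$ after shrinking it below $c\nu/m$ (so that every tuple of caps meeting $S_1\times\cdots\times S_k$ is automatically $\tfrac\nu2$-transverse). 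There are $\lesssim\theta_0^{-(m-1)}$ caps per family; expanding $\prod_j(\sum_a T_{j,a})^{1/(k-1)}$ over the $k$-tuples of caps and applying H\"older, it suffices to bound one transverse tuple at a time, the number of tuples and the H\"older loss contributing a factor $\mathrm{Poly}(\theta_0^{-1})\le C_{\e,m}\,\mathrm{Poly}(\nu^{-1})$. For a fixed transverse tuple, choose a linear map $L$ taking the $k$ cap-centres to $e_1,\dots,e_k$; since those centres are $\nu$-transverse, $\|L\|,\|L^{-1}\|\lesssim_m\mathrm{Poly}(\nu^{-1})$, so $L$ distorts $Q_S$ into a body comparable to $Q_{C\nu^{-1}S}$ and each $\tau_{j,a}$ into a tube of radius $\lesssim\nu^{-1}$ whose axis makes angle $\lesssim\nu^{-1}\theta_0$ with $e_j$; rescaling and fattening radii back to $1$ costs another $\mathrm{Poly}(\nu^{-1})$. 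After renaming, the task becomes: for a small parameter $\theta_0>0$ at our disposal, with family $j$ consisting of unit tubes whose axes make angle $<\theta_0$ with $e_j$ ($j=1,\dots,k$), prove $\int_{Q_S}\prod_{j=1}^k(\sum_a T_{j,a})^{1/(k-1)}\le C(S)\prod_{j=1}^k N_j^{1/(k-1)}$ with $C(S)\lesssim_{\e,m}S^\e$.

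\textbf{Base case $S\le\theta_0^{-1}$.} Here the transverse drift of each tube over $Q_S$ is $\le S\theta_0\lesssim1$, so $\tau_{j,a}\cap Q_S$ lies in an axis-parallel box of dimensions $O(1)\times\cdots\times O(1)\times S$ with long side along $e_j$; thus $\sum_a T_{j,a}\lesssim g_j$ where, up to $O(1)$ fattening, $g_j$ depends only on the coordinates $\neq x_j$ and has mass $\sim N_j$ in them. When $k=m$ this is exactly the Loomis--Whitney configuration and gives $\prod_j N_j^{1/(m-1)}$. When $k<m$, apply Loomis--Whitney on each $\ZR^k$-slice $\{x_{k+1}=c_{k+1},\dots,x_m=c_m\}$ to obtain $\prod_j N_j(\vec c)^{1/(k-1)}$, where $N_j(\vec c)\le N_j$ counts the family-$j$ tubes reaching $\vec c$, and then integrate in $\vec c$ using $\int N_j(\vec c)\,d\vec c\lesssim N_j$ together with $N_j(\vec c)^{1/(k-1)}\le N_j(\vec c)\,N_j^{-(k-2)/(k-1)}$ to recover $\prod_j N_j^{1/(k-1)}$. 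Hence $C(S)\lesssim_m1$ for $S\le\theta_0^{-1}$.

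\textbf{Induction on scales, and conclusion.} For $S>\theta_0^{-1}$ write $S=\rho S'$ with $\rho=\theta_0^{-1}$ and cover $Q_S$ by $\rho$-cubes $q$. Rescaling a single $q$ to a unit cube turns the tubes into $\rho^{-1}$-tubes, and rescaling back to scale $\rho$ puts us in the base case, so $\int_q\prod_j(\sum_a T_{j,a})^{1/(k-1)}\lesssim_m\prod_j M_j(q)^{1/(k-1)}$ with $M_j(q)=\#\{a:\tau_{j,a}\cap q\neq\emptyset\}$. Summing over $q$ and identifying the grid of $\rho$-cubes with $Q_{S'}$, the function $q\mapsto\chi[\tau_{j,a}\cap q\neq\emptyset]$ is dominated by the indicator of a radius-$O(1)$ tube in $Q_{S'}$ with the same axis direction---hence the same angle bounds and transversality---so $\sum_q\prod_j M_j(q)^{1/(k-1)}\lesssim_m\int_{Q_{S'}}\prod_j(\sum_a\widetilde T_{j,a})^{1/(k-1)}\le C(S')\prod_j N_j^{1/(k-1)}$. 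Therefore $C(S)\le C_m\,C(S\theta_0)$; iterating this $\sim\log_{1/\theta_0}S$ times down to the base case gives $C(S)\lesssim_m C_m^{\log_{1/\theta_0}S}=S^{(\log C_m)/\log(1/\theta_0)}$. Finally choose $\theta_0=\theta_0(\e,m)$ so small that $(\log C_m)/\log(1/\theta_0)\le\e$; then $C(S)\lesssim_{\e,m}S^\e$, and carrying the constant from the Reductions step back through yields the claimed bound $C_\e\,\mathrm{Poly}(\nu^{-1})\,S^\e$.

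\textbf{Main obstacle.} The heart of the matter is the induction step: it unavoidably loses the dimensional Loomis--Whitney constant $C_m$ each time, so one must arrange that the number of steps, $\sim\log_{1/\theta_0}S$, is small compared with $\e^{-1}$---this is exactly why $\theta_0$ must shrink as $\e\to0$ (hence $C_\e\to\infty$), and why this method stops short of the endpoint $\e=0$, which requires the Bennett--Carbery--Tao monotonicity argument. A secondary point is ensuring, in the Reductions step, that the cap decomposition and the linearization keep every loss polynomial in $\nu^{-1}$ while preserving transversality on caps; and, in the base case for $k<m$, that integrating out the extra $m-k$ variables is genuinely harmless, which is where the elementary inequality $N_j(\vec c)^{1/(k-1)}\le N_j(\vec c)\,N_j^{-(k-2)/(k-1)}$ enters.
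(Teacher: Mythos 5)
The paper does not prove Theorem \ref{MK} at all: it is imported as a black box from Bennett--Carbery--Tao \cite{BCT} and from Guth's short proof \cite{G}. Your proposal is, in substance, a correct reconstruction of the argument of \cite{G} --- cap decomposition to reduce to nearly axis-parallel tubes, Loomis--Whitney at the scale where the angular drift is $O(1)$, and a two-scale induction in which the coarsened incidence counts are again dominated by a tube configuration with the same directions --- so you are following exactly the route the paper is pointing to. The bookkeeping is right where it matters: the Loomis--Whitney constant in the base case and the coarsening constant in the inductive step are independent of $\theta_0$, which is what lets you take $\theta_0=\theta_0(\e,m,\nu)$ at the end and convert $C_m^{\log_{1/\theta_0}S}$ into $S^\e$, with all $\theta_0$- and $\nu$-dependence confined to the initial cap/linearization losses, which are $C_\e\,\mathrm{Poly}(\nu^{-1})$ as required.

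One step is concretely wrong, though easily repaired: in the base case for $k<m$, the inequality $N_j(\vec c)^{1/(k-1)}\leq N_j(\vec c)\,N_j^{-(k-2)/(k-1)}$ is reversed for $k\geq 3$. Dividing both sides by $N_j(\vec c)^{1/(k-1)}$ it is equivalent to $1\leq (N_j(\vec c)/N_j)^{(k-2)/(k-1)}$, i.e.\ to $N_j(\vec c)\geq N_j$, whereas you only know $N_j(\vec c)\leq N_j$. (For $k=2$ it is an identity, which may be how it slipped in.) The integration over the transverse variables still goes through, but by a different elementary inequality: write
\begin{equation*}
\prod_{j=1}^{k} N_j(\vec c)^{\frac{1}{k-1}}
=\Big(\prod_{j=1}^{k} N_j^{\frac{1}{k-1}}\Big)\prod_{j=1}^{k}\Big(\frac{N_j(\vec c)}{N_j}\Big)^{\frac{1}{k-1}}
\leq \Big(\prod_{j=1}^{k} N_j^{\frac{1}{k-1}}\Big)\cdot\frac{1}{k}\sum_{j=1}^{k}\Big(\frac{N_j(\vec c)}{N_j}\Big)^{\frac{k}{k-1}},
\end{equation*}
by AM--GM, and since $N_j(\vec c)/N_j\leq 1$ and $\tfrac{k}{k-1}>1$ each summand is at most $N_j(\vec c)/N_j$, whose integral in $\vec c$ is $O(1)$ by your mass bound $\int N_j(\vec c)\,d\vec c\lesssim N_j$. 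With that substitution the base case, and hence the whole argument, is sound.
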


Now we begin the proof of Theorem \ref{thm-kRS}. By H\"older,
$$ 
\big\| \prod_{i=1}^{k} |\eit f_i|^{1/k} \big\|_{L^{p_{n+1}}(Y)} \le \prod_{i=1}^k \big\| e^{i t \triangle} f_i  \big\|_{L^{p_{n+1}}(Y)}^{1/k}. 
$$
For each $i$, we process $\| \eit f_i \|_{L^{p_{n+1}}(Y)}$ following the proof of Theorem \ref{thm-LRS}. We decompose $f_i = \sum_\Box f_{i, \Box}$, and we follow the proof of Theorem \ref{thm-LRS} up to equation (\ref{forthm2}).  Therefore, for each $i$, we see that
\begin{equation} \label{forthm2'}
 \big\| \eit f_i \big\|_{L^{p_{n+1}}(Y)} \lessapprox \left[\mu_i^{\frac{2}{n}} \sigma_{i,\Box}^{-\frac{2}{n}} \sum_{\Box \in \mathbb{B}_i}
\| f_{i,\Box} \|_{L^2}^{p_{n+1}} \right]^{1/p_{n+1}}.
\end{equation}
We claim that the following geometric estimate holds:
\begin{equation} \label{geom-k} 
N \prod_{i=1}^k \mu_i^{1/(k-1)} \lessapprox \prod_{i=1}^k \big(\sigma_{i, \Box}  | \mathbb{B}_i | \big)^{1/(k-1)}. 
\end{equation}
Starting with (\ref{forthm2'}) and inserting this estimate, we see that 
\begin{equation*}
\begin{split}
&\prod_{i=1}^k \big\| e^{i t \triangle} f_i  \big\|_{L^{p_{n+1}}(Y)}^{1/k}
\lessapprox \prod_{i=1}^k \left[ \mu_i^{\frac 2n} \sigma_{i, \Box}^{-\frac 2n} \sum_{\Box \in \mathbb{B}_i} \| f_{i, \Box} \|_{L^2}^{p_{n+1}} \right]^{\frac{1}{p_{n+1}} \cdot \frac{1}{k} } \\
\lessapprox &\prod_{i=1}^k \left[ N^{-\frac{2(k-1)}{kn}} | \mathbb{B}_i|^{\frac 2 n}  \sum_{\Box \in \mathbb{B}_i} \| f_{i, \Box} \|_{L^2}^{p_{n+1}} \right]^{\frac{1}{p_{n+1}} \cdot \frac{1}{k} } \lesssim N^{-\frac{k-1}{k(n+2)}} \prod_{i=1}^k \| f_i \|_{L^2}^{1/k}\,.
\end{split}
\end{equation*}
where the last inequality follows from the assumption that $\|f_{i,\Box}\|_{L^2}$ is essentially constant among all $\Box\in \ZB_i$.
It remains to prove the claim \eqref{geom-k}. See Figure \ref{figure:Fig-trans} to get some intuition about how two transversal families of tubes intersect. For the higher order of linearity, we need to invoke multilinear Kakeya estimates - Theorem \ref{MK}. 

\begin{figure}  [H]
\centering
\includegraphics[scale=.5]{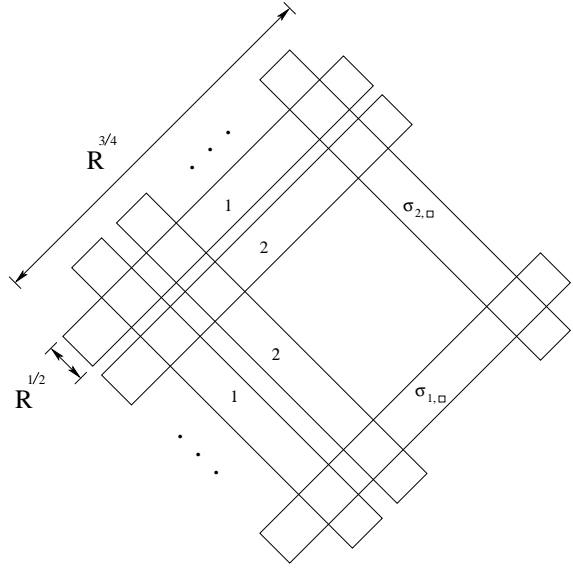}
\caption{\small{ at most $O(\sigma_{1, \Box}\sigma_{2, \Box})$ cubes created by two transversal families of
  rectangular boxes  }}
\label{figure:Fig-trans}
\end{figure}

Recall that $Y'\subset Y, |Y|\lessapprox |Y'|$, the number of $R^{1/2}$-cubes in $Y$ is $N$, and for each $Q$ in $Y'$,
$$
\#\{\Box \in \ZB_i: Q \subset Y_\Box\} \sim \mu_i\,.
$$
Therefore,
$$
N\prod_{i=1}^k \mu_i^{1/(k-1)} \lessapprox
\sum_{Q\in Y'}\prod_{i=1}^k \big(\#\{\Box \in \ZB_i: Q \subset Y_\Box\}\big)^{1/(k-1)}\,.
$$
Cover $B^{n+1}_R$ by balls $B$ of radius $R^{3/4}$. Observe that if an $R^{1/2}$-cube $Q$ inside $B$ is contained in some $Y_\Box$, then $B$ is contained in $10\Box$. Define
$$
\ZB_{i,B}:=\{\Box\in \ZB_i : B\in 10 \Box\}\,,
$$
then
$$
N\prod_{i=1}^k \mu_i^{1/(k-1)} \lessapprox
\sum_{B:R^{3/4}\text{-balls}} \,\,\sum_{Q\in Y':Q\subset B} \,\,\prod_{i=1}^k \big( \#\{\Box\in \ZB_{i,B}: Q \in Y_\Box\}
\big)^{1/(k-1)}\,.
$$
Note that for each $B$, we have $k$ transverse collections of $R^{1/2}\times\cdots\times R^{1/2}\times R^{3/4}$-tubes passing through it, and the number of such tubes in the $i$-th collection is $\lesssim |\ZB_{i,B}|\cdot \sigma_{i,\Box}$. It follows from the multilinear Kakeya estimate that
$$
\sum_{Q\in Y':Q\subset B} \,\,\prod_{i=1}^k \big( \#\{\Box\in \ZB_{i,B}: Q \in Y_\Box\}
\big)^{1/(k-1)} \lessapprox \prod_{i=1}^k \big(|\ZB_{i,B}|\cdot \sigma_{i,\Box}\big)^{1/(k-1)}\,.
$$
Therefore,
$$
N\prod_{i=1}^k \mu_i^{1/(k-1)} \lessapprox
\sum_{B:R^{3/4}\text{-balls}} \,\,\prod_{i=1}^k \big(|\ZB_{i,B}|\cdot \sigma_{i,\Box}\big)^{1/(k-1)}\,.
$$
By the definition of $\ZB_{i,B}$ and multilinear Kakeya again, 
$$
\sum_{B:R^{3/4}\text{-balls}} \,\,\prod_{i=1}^k |\ZB_{i,B}|^{1/(k-1)} \lessapprox \prod_{i=1}^k |\ZB_i|^{1/(k-1)}\,.
$$
Combining these together, we get the desired estimate \eqref{geom-k}.

\subsection{Refined Strichartz estimates in variety case}
We remark that, by the same technique as in \cite{DGL}, Theorem \ref{thm-LRS} and \ref{thm-kRS} can be generalized to variety case as follows. We skip the rigorous proof and refer interested readers to Section 7 of \cite{DGL}.

\begin{theorem} [Linear refined Strichartz for $m$-variety in dimension $n+1$]  \label{thm-LRS-m}
Let \\
$m$ be a dimension in the range $2\leq m \leq n+1$. Let $p_m=2(m+1)/(m-1)$. Suppose that $Z=Z(P_1,\cdots,P_{n+1-m})$ is a transverse complete intersection where ${\rm Deg}\,P_i \leq D_Z = R^{\delta_{\rm deg}}$. Here $\delta_{\rm deg} \ll \delta$ is a small parameter. Suppose that $f\in L^2(\ZR^{n})$ is Fourier supported in $B^n(0,1)$ and concentrated in wave packets from $\ZT_Z (E).$ 
Suppose that $Q_1, Q_2, ...$
are lattice  $R^{1/2}$-cubes in $B_R$, so that
$$ \| \eit f \|_{L^{p_m}(Q_j)} \textrm{ is essentially constant in $j$}. $$
\noindent Suppose that these cubes are arranged in horizontal slabs of the form $\ZR \times \cdots \times \ZR \times \{t_0, t_0 + R^{1/2} \}$, and that each such slab contains $\sim \sigma$ cubes $Q_j$.  Let $Y$ denote $\bigcup_j Q_j$. Then
\begin{equation}
\label{LRS-m}\| \eit f \|_{L^{p_m}(Y)} \lessapprox  E^{O(1)}R^{-\frac{n+1-m}{2(m+1)}} \sigma^{-\frac{1}{m+1}}  \| f \|_{L^2}.
\end{equation}
\end{theorem}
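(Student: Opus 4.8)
\emph{Proof plan.} The plan is to mimic, nearly line for line, the proof of Theorem~\ref{thm-LRS} from Subsection~\ref{sec:pfLRS}, replacing the ambient dimension $n+1$ by the variety dimension $m$ everywhere the paraboloid geometry enters, with the tangency of $f$ to $Z$ supplying that reduction; this is precisely the content of Section~7 of \cite{DGL}, which in turn rests on the tangency-to-varieties machinery of \cite{lG16}. Concretely, I would run an induction on the radius $R$ (the base case $R=O(1)$ being trivial), perform the two-scale splitting $f=\sum_\Box f_\Box$ with $\Box$ an $R^{3/4}\times\cdots\times R^{3/4}\times R$-box, rescale parabolically, and reorganize the tubes and cubes by dyadic pigeonholing.

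The only genuinely new point is the decoupling step. Cover $B^{n+1}_R$ by $R^{3/4}$-balls $B$. For a cube $Q_j\subset B$, the restriction $\eit f|_{Q_j}$ only feels the wave packets passing through $Q_j$; since $f$ is concentrated in $\ZT_Z(E)$, these are $ER^{-1/2}$-tangent to $Z$ near $B$, and --- using that $Z$ is a transverse complete intersection of small degree $D_Z=R^{\delta_{\rm deg}}$ --- their directions $G(\theta)$ lie, up to the $R^{\delta}$-fuzziness, within an $ER^{-1/2}$-angular neighborhood of a fixed $m$-dimensional subspace $V_B$; equivalently, the $R^{-1/4}$-caps $\tau$ that matter over $B$ have centers clustered near an $(m-1)$-dimensional affine subspace. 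Decoupling $f$ into those caps is therefore an $m$-dimensional $\ell^2$-decoupling for an $(m-1)$-dimensional sub-paraboloid, so the Bourgain--Demeter theorem (Theorem~\ref{bourdem}) in dimension $m$, at its sharp exponent $p_m=2(m+1)/(m-1)$, gives
\begin{equation*}
\|\eit f\|_{L^{p_m}(Q_j)}\lessapprox\Big(\sum_\Box\|\eit f_\Box\|_{L^{p_m}(Q_j)}^2\Big)^{1/2},\qquad Q_j\subset B.
\end{equation*}
This is exactly where the exponent passes from $p_{n+1}$ to $p_m$.

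The rest is bookkeeping. Parabolic rescaling is linear in $(x,t)$, so it preserves the degree of $Z$ and, up to a bounded power, the tangency parameter $E$; hence after rescaling the $R^{3/4}$-ball to an $R^{1/2}$-ball each $f_\Box$ is again concentrated in wave packets tangent to a rescaled $m$-dimensional transverse complete intersection, and the inductive hypothesis --- Theorem~\ref{thm-LRS-m} at radius $R^{1/2}$ --- applies. The dyadic pigeonholing is identical to Subsection~\ref{sec:pfLRS}: sort the $R^{1/2}\times\cdots\times R^{1/2}\times R^{3/4}$-tubes $S\subset\Box$ by $\|\eit f_\Box\|_{L^{p_m}(S)}\sim\lambda$, then by their density $\sim\sigma_\Box$ in $R^{3/4}$-slabs, then sort the boxes $\Box$ by $\|f_\Box\|_{L^2}$, then sort the cubes $Q_j$ by the number $\sim\mu$ of sets $Y_\Box$ containing them; the geometric estimate $|Y_\Box\cap Y|\lesssim\frac{\sigma_\Box}{\sigma}|Y|$ survives verbatim and yields $\mu\lessapprox\frac{\sigma_\Box}{\sigma}|\mathbb{B}|$. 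Combining the $m$-dimensional decoupling, H\"older over the $\sim\mu$ multiplicities, the rescaled inductive bound, the bound on $\mu$, and the $L^2$-orthogonality of $\{f_\Box\}$, and carefully tracking the powers of $R$, closes the induction and produces the asserted factor $E^{O(1)}R^{-\frac{n+1-m}{2(m+1)}}\sigma^{-\frac{1}{m+1}}$; the exponents can be cross-checked against the extremal example in which $\eit f$ is a sum of $\sim\sigma$ disjoint wave packets all tangent to a fixed $m$-plane.

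The main obstacle --- and the reason the authors only sketch and refer to \cite[Section~7]{DGL} and \cite{lG16} --- is the rigorous handling of the variety structure through the induction: justifying that over the intermediate scale $R^{3/4}$ the tangent wave packets genuinely behave $m$-dimensionally (so that the $m$-dimensional decoupling is legitimate), and controlling the drift of the parameters $E$ and $D_Z$ across the iteratively many rescalings. With $\delta_{\rm deg}\ll\delta\ll\e$ these losses are absorbed into the $R^{\e}$ hidden in $\lessapprox$ and into the harmless $E^{O(1)}$.
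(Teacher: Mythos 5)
Your plan is essentially the intended proof: the paper itself does not prove Theorem \ref{thm-LRS-m} but defers to the technique of Section 7 of \cite{DGL}, displaying only the $m$-plane/$E\approx 1$ special case as a consistency check, and what you outline (the two-scale induction of Subsection \ref{sec:pfLRS} with the decoupling step replaced by lower-dimensional decoupling supplied by tangency, plus parabolic rescaling preserving the variety, its degree, and the tangency parameter up to $E^{O(1)}$) is exactly that technique. Two remarks on points you left loose. First, your clustering claim is stated at the wrong scale: over an $R^{3/4}$-ball $B$ the tangent planes of a low-degree $Z$ may rotate by a large angle (degree $R^{\delta_{\rm deg}}$ does not preclude curvature at scale $R^{3/4}$), so there need not be a single $V_B$; what is true, and all that is needed since decoupling is applied on the $R^{1/2}$-cubes $Q_j$, is clustering cube by cube: because the tangency definition requires ${\rm Angle}(G(\theta),T_zZ)\leq ER^{-1/2}$ at \emph{every} nonsingular $z\in N_{2ER^{1/2}}(T_{\theta,\nu})\cap 2B_R\cap Z$, any fixed nonsingular $z_0\in Z$ near $Q_j$ serves as a common reference, so all wave packets meeting $Q_j$ have directions within $O(ER^{-1/2})$ of $T_{z_0}Z$ and the relevant $R^{-1/4}$-caps lie within $O(R^{-1/4})$ of an $(m-1)$-dimensional affine subspace, which is what the cylindrical reduction to $m$-dimensional Bourgain--Demeter at $p_m$ requires. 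Second, the ``careful tracking of powers of $R$'' you defer is the one computation worth doing, and it works out exactly: with $\widehat g(\eta)=R^{-n/4}\widehat{f_\Box}(\xi_0+R^{-1/4}\eta)$ one has $\|g\|_2=R^{-n/8}\|f_\Box\|_2$ and $\|\eit f_\Box\|_{L^{p_m}(Y_\Box)}=R^{\frac{n+2}{4p_m}}\|e^{it'\Delta}g\|_{L^{p_m}(Y'_\Box)}$, and since $\frac{(n+2)(m-1)}{8(m+1)}-\frac{n+1-m}{4(m+1)}-\frac n8=-\frac{n+1-m}{2(m+1)}$, applying the inductive bound at scale $R^{1/2}$ already yields $\|\eit f_\Box\|_{L^{p_m}(Y_\Box)}\lessapprox E^{O(1)}R^{-\frac{n+1-m}{2(m+1)}}\sigma_\Box^{-\frac1{m+1}}\|f_\Box\|_2$; the remaining bookkeeping cancels as in the linear case because $p_m/2-1=2/(m-1)=p_m/(m+1)$, so the stated $R$-power is precisely the scale-consistent one and the induction closes.
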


\begin{theorem} [$k$-linear refined Strichartz for $m$-variety in dimension $n+1$] \label{thm-kRS-m} 
Let \\
$m$ be a dimension in the range $2 \leq m \leq n+1$. Let $p_m=2(m+1)/(m-1)$. Suppose that $f_i: \ZR^n  \rightarrow \ZC$, $i=1,2,\cdots,k$, have frequencies $k$-transversely supported in $B^n(0,1)$, where $2\leq k\leq m$. Suppose that the functions $f_i$ are concentrated in wave packets from $\ZT_Z(E)$, where $Z=Z(P_1,\cdots,P_{n+1-m})$  is a transverse complete intersection with ${\rm Deg}\,P_i \leq D_Z=R^{\delta_{\rm deg}}$. Here $\delta_{\rm deg} \ll \delta$ is a small parameter. Suppose that $Q_1, Q_2,\cdots, Q_N$ are lattice  $R^{1/2}$-cubes in $B_R^{n+1}$, so that
$$
\| \eit f_i \|_{L^{p_m}(Q_j)}  \textrm{ is essentially constant in $j$, for each $i=1,2,\cdots,k$}. 
$$
Let $Y$ denote $\bigcup_{j=1}^{N} Q_j$. Then
\begin{equation} \label{kRS-m}
\left\| \prod_{i=1}^{k} |\eit f_i|^{1/k} \right\|_{L^{p_m}(Y)} \lessapprox E^{O(1)} R^{-\frac{n+1-m}{2(m+1)}} N^{-\frac{k-1}{k(m+1)}} \prod_{i=1}^k \| f_i \|_{L^2}^{1/k}\,.
\end{equation}
\end{theorem}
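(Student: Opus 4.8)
\noindent\textbf{Proof strategy for Theorem \ref{thm-kRS-m}.}
The plan is to fuse the argument behind the flat $k$-linear estimate (the proof of Theorem \ref{thm-kRS} in Subsection \ref{sec:pfMLRS}) with the variety-adapted linear argument (Theorem \ref{thm-LRS-m}), exactly as Theorem \ref{thm-kRS} was deduced from Theorem \ref{thm-LRS} and multilinear Kakeya. One runs an induction on the radius $R$, carrying the exponent $p_m$, the degree bound $D_Z=R^{\delta_{\rm deg}}$ and the tangency parameter $E$ as auxiliary data, the base case ($R=O(1)$, or $N=O(1)$) being immediate from Strichartz and H\"older. First I would apply H\"older to write
$$
\big\| \prod_{i=1}^{k} |\eit f_i|^{1/k} \big\|_{L^{p_m}(Y)} \le \prod_{i=1}^{k} \big\| \eit f_i \big\|_{L^{p_m}(Y)}^{1/k},
$$
so that it suffices to process each factor $\| \eit f_i \|_{L^{p_m}(Y)}$ by the linear variety machine and then multiply the outcomes.

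Next, fixing $i$ and following the proof of Theorem \ref{thm-LRS-m} (that is, the variety version of Subsection \ref{sec:pfLRS}, as in Section~7 of \cite{DGL}), I would decompose $f_i=\sum_\Box f_{i,\Box}+{\rm RapDec}(R)\|f_i\|_2$, where $\Box=\Box_{\tau,B}$ ranges over pairs of an $R^{-1/4}$-frequency cap $\tau$ and an $R^{3/4}$-ball $B$, and $\eit f_{i,\Box}$ is essentially supported on an $R^{3/4}\times\cdots\times R^{3/4}\times R$-box. Because $f_i$ is concentrated in wave packets from $\ZT_Z(E)$ and $Z$ is a transverse complete intersection of codimension $n+1-m$ and controlled degree, inside each $R^{3/4}$-ball $B$ only the boxes $\Box_{\tau,B}$ tangent to $Z\cap B$ carry mass, and after rescaling $B$ to unit scale their frequency caps lie within an $O(R^{-1/4})$-neighborhood of an $(m-1)$-dimensional surface in the paraboloid; this is precisely where the $m$-dimensional Bourgain--Demeter decoupling (Theorem \ref{bourdem} with exponent $p_m$) can be invoked, and it is the origin of the factor $R^{-\frac{n+1-m}{2(m+1)}}$. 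Carrying out the same dyadic pigeonholing as in Subsection \ref{sec:pfLRS} --- over the common size of $\|\eit f_{i,\Box}\|_{L^{p_m}}$ on $R^{1/2}\times\cdots\times R^{1/2}\times R^{3/4}$-tubes, the slab multiplicity $\sigma_{i,\Box}$, the size $\|f_{i,\Box}\|_2$, and the multiplicity $\mu_i$ with which the sets $Y_\Box$ cover a refined subset $Y'\subset Y$ of cubes --- and then feeding the inductive hypothesis at scale $R^{1/2}$ (in its variety form, after parabolically rescaling $B$) into each $\|\eit f_{i,\Box}\|_{L^{p_m}(Y_\Box)}$, one reaches the analogue of \eqref{forthm2}:
$$
\big\| \eit f_i \big\|_{L^{p_m}(Y)} \lessapprox E^{O(1)} R^{-\frac{n+1-m}{2(m+1)}} \Big[ \mu_i^{\frac{2}{m-1}} \sigma_{i,\Box}^{-\frac{2}{m-1}} \sum_{\Box\in\ZB_i} \| f_{i,\Box} \|_2^{p_m} \Big]^{1/p_m},
$$
the exponents $\tfrac{2}{m-1}$ replacing $\tfrac{2}{n}$ because $p_m=\tfrac{2(m+1)}{m-1}$.

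The crux is then the geometric inequality
$$
N \prod_{i=1}^{k} \mu_i^{1/(k-1)} \lessapprox \prod_{i=1}^{k} \big( \sigma_{i,\Box}\, |\ZB_i| \big)^{1/(k-1)},
$$
which I would prove exactly as \eqref{geom-k}: bound the left side by $\sum_{Q\in Y'}\prod_i\big(\#\{\Box\in\ZB_i:Q\subset Y_\Box\}\big)^{1/(k-1)}$, group the cubes $Q$ into $R^{3/4}$-balls $B$, observe that inside each $B$ the $k$ families of $R^{1/2}\times\cdots\times R^{1/2}\times R^{3/4}$-tubes remain $k$-transverse (the $k$-transversality of the frequency supports passes to the tube directions and survives restriction to $B$), apply the Bennett--Carbery--Tao multilinear Kakeya estimate (Theorem \ref{MK}) at scale $R^{3/4}$ to count cubes inside $B$, and apply multilinear Kakeya once more at scale $R$ to the $k$-transverse family of $R^{3/4}$-boxes $\Box$. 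Only transversality --- not tangency to $Z$ --- is used here, so the variety hypothesis costs nothing in this step. Substituting this into the displayed bound for each $\|\eit f_i\|_{L^{p_m}(Y)}$, taking the product of $\tfrac1k$-th powers, and using that $\|f_{i,\Box}\|_2$ is essentially constant over $\ZB_i$ together with the $L^2$-orthogonality of the $f_{i,\Box}$, the $\mu_i$, $\sigma_{i,\Box}$ and $|\ZB_i|$ factors combine to produce exactly $E^{O(1)}R^{-\frac{n+1-m}{2(m+1)}}N^{-\frac{k-1}{k(m+1)}}\prod_i\|f_i\|_2^{1/k}$, closing the induction.

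The main obstacle I anticipate is not the multilinear Kakeya step but the bookkeeping of the tangency data through the variety-adapted decoupling and the parabolic rescaling: one must verify that the $R^{-1/4}$-boxes appearing in the decomposition of $f_i$ which meet $N_{ER^{1/2}}Z$ are genuinely confined to an $m$-dimensional neighborhood so that $m$-dimensional decoupling applies, and that a wave packet $ER^{-1/2}$-tangent to $Z$ at scale $R$, once one zooms into an $R^{3/4}$-ball and rescales to scale $R^{1/2}$, becomes a wave packet $E'(R^{1/2})^{-1/2}$-tangent to a transverse complete intersection $Z'$ of degree $\le D_Z$ with $E'\le R^{O(\delta)}E$, so that the induction can legitimately be fed (this is what produces the harmless $E^{O(1)}$ and $R^\e$ losses). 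These are precisely the points carried out in Section~7 of \cite{DGL}, which we do not reproduce here.
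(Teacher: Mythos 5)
Your proposal follows essentially the same route the paper intends: the paper itself gives no detailed argument for Theorem \ref{thm-kRS-m}, deferring to the technique of Section 7 of \cite{DGL} combined with the flat $k$-linear argument of Subsection \ref{sec:pfMLRS}, and your plan --- H\"older, the variety-adapted linear machinery (decoupling, pigeonholing, parabolic rescaling and induction with the tangency parameter $E$ tracked) applied to each factor up to the analogue of \eqref{forthm2}, followed by the multilinear Kakeya geometric estimate \eqref{geom-k}, which indeed uses only transversality --- is exactly that route, with the exponent bookkeeping ($\mu_i^{2/(m-1)}$, $N^{-\frac{k-1}{k(m+1)}}$, $R^{-\frac{n+1-m}{2(m+1)}}$) checking out. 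The points you flag (confinement of the relevant caps so that the $p_m$-decoupling gain applies, and preservation of $E R^{-1/2}$-tangency under zooming into $R^{3/4}$-balls and rescaling) are precisely the details carried out in \cite{DGL} and omitted here as well.
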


To get some intuition, we consider a special case of Theorem \ref{thm-LRS-m}, in which the variety $Z$ is
naturally replaced by an $m$-plane $V$, and $E \approx 1$.  In the planar case, all wave packets are contained in the
$\approx R^{1/2}$-neighborhood of $V$, and the absolute value $|\eit f(x)|$ is essentially constant along $(n+1-m)$-planes which are parallel to $V'$, where $V'$ is a subspace transverse (roughly normal) to $V$.
 Note that $\eit f(x)|_V$ is a Fourier extension operator in
dimension $m$. Denote $\eit f(x)|_V$ by $\eir h(y)$ for some function $h$ Fourier supported in $B^{m-1}(0,1)$, where $(y,r)$ denote coordinate variables for $V$.
Hence the conclusion in Theorem \ref{thm-LRS-m} can be rephrased in terms of $h$. Indeed, observe that
 $$\| \eit f (x)\|^{p_m}_{L^{p_m}(Y)} \sim R^{(n+1-m)/2} \| \eir h(y)\|^{p_m}_{L^{p_m}(Y\cap V)},$$
 and
\begin{equation*} 
\begin{split}
\|f\|^2_2\sim &R^{-1}\|\eit f\|_{L^2(B^{n+1}_R)}^2 \\
\sim & R^{-1}R^{(n+1-m)/2}\|\eir h\|^2_{L^2(B^{n+1}_R\cap V)}\sim R^{(n+1-m)/2}
\|h\|_2^2.
\end{split}
\end{equation*}
Therefore the estimate \eqref{LRS-m} is equivalent to
\begin{equation} \label{linrefV}
\|\eir h\|_{L^{p_m}(Y\cap V)} \lessapprox \sigma^{-1/(m+1)}\|h\|_{L^2}.
\end{equation}
This is exactly the conclusion of Theorem \ref{thm-LRS} in dimension $m$. Similarly the $m$-plane case of Theorem \ref{thm-kRS-m} is essentially Theorem \ref{thm-kRS} in dimension $m$.

\end{document}